\newtheorem{Proposition}{Proposition}
\newtheorem{Theorem}{Theorem}
\newtheorem{Lemma}{Lemma}
\newtheorem{Corollary}{Corollary}
\newtheorem{Remark}{Remark}
\newcommand{\R}{\mathbb{R}}
\newcommand{\Rp}{\mathbb{R}\mathrm{P}}
\newcommand{\ad}{\operatorname{ad}}
\renewcommand{\ker}{\mathrm{ker\ }}
\newcommand{\E}{\mathbb{E}}
\newcommand{\tr}{\operatorname{tr}}
\newcommand{\Ad}{\operatorname{Ad}}
\newcommand{\Sym}{\mathrm{Sym}}
\renewcommand{\>}{\rangle}
\newcommand{\<}{\langle}
\newcommand{\sL}{\mathcal{L}}
\newcommand{\vspan}{\operatorname{span}}
\newcommand{\ddto}{\left.\operatorname{\frac{d}{dt}}\right|_{t=0}}
\newcommand{\fsun}{\mathfrak{su}(n)}
\newcommand{\fson}{\mathfrak{so}(n)}
\newcommand{\fz}{\mathfrak{Z}}
\newcounter{para}
\title{Geometric methods for optimal sensor design}
\author{M.-A. Belabbas\thanks{University of Illinois, Urbana-Champaign, USA. Email: {\tt belabbas@illinois.edu}}}
\date{}
\begin{document}
\maketitle

\begin{abstract}
{An \emph{observer} is an estimator of the state of a dynamical system from noisy sensor measurements. The need for observers is ubiquitous, with applications in fields ranging from engineering to biology to economics. The most widely used observer is the Kalman filter, which is known to be the optimal estimator of the state when the noise is additive and Gaussian. Because its performance is limited by the sensors to which it is paired, it is natural to seek an optimal sensor for the Kalman filter. The problem is however not convex and, as a consequence, many  ad hoc methods have been used over the years to design sensors. We show in this paper how to characterize and obtain the optimal sensor for the Kalman filter.   Precisely, we exhibit a positive definite operator which optimal sensors have to commute with. We furthermore provide a gradient flow to find optimal sensors, and prove the convergence of this gradient flow to the unique minimum in a broad range of applications. This optimal sensor yields the lowest possible estimation error for  measurements with a fixed  signal to noise ratio. The results presented here also apply to the dual problem of optimal actuator design.}
\end{abstract}
%%%%%%%%%%%%%%%%%%%%%%%%%%%

\section{Introduction}

Since the early work of Kalman, Bucy~\cite{kalman1960new, Kalman61newresults} and Stratonovich \cite{stratonovich1960}, the estimation of linear systems has  expanded its range of applications from its engineering roots~\cite{Rao11092001} to fields such as environmental engineering, where for example it is used to estimate sea-level change~\cite{Hay26022013}; financial engineering, where for example it is used to estimate the realized volatility error~\cite{RSSB:RSSB336} or to price energy futures~\cite{manoliu2002energy}; to economics~\cite{athans_kalmanecon1974}, process control~\cite{Musulin2005629} or even biology~\cite{Barnes:2011:10.1098/rsfs.2011.0056}. The common thread to these applications is that one cannot observe exactly all internal variables of a system, but instead needs to estimate them from partial, noisy measurements coming from a set of sensors.

\maketitle
\noindent We address in this paper the optimal design of such sensors.

There are well-developed methods in control theory to design estimators of the state of a dynamical system based on sensor measurements; such  estimators are called \emph{observers} of a system. It stands to reason that as the signal to noise ratio of the measurements increases, the estimation error afforded by an observer decreases. The question of interest is thus to find which measurements with a given signal to noise ratio are optimal  for an observer, optimal in the sense that the estimation error is minimized. Because the Kalman filter is the minimum mean square estimator of the state~\cite{optimalfiltering_anderson79}, optimal measurements for the Kalman filter yield the \emph{lowest estimation} error which one can obtain \emph{for a given signal to noise ratio}. We call the sensor providing such measurements optimal. 

The optimal sensor design problem for Kalman filters is almost as old as the Kalman filter itself, and over the years a variety of methods have been proposed, we refer the reader to the recent thesis~\cite{alan_optimalthesis2011} for a survey. The major obstacle encountered is that the optimization problem, formulated precisely below, defining an optimal sensor is not convex. To sidestep this obstacle, suboptimal  solutions obtained by way of convex relaxations or ad-hoc heuristics for specific application are often used~\cite{optimal_placement_geneticair1991,Musulin2005629}. 
Another approach of choice is to focus on a convex performance measure~\cite{geromel_convex89, darivandi2013algorithm} or optimize  bounds for the estimation error~\cite{li2009designing}. There is also a extensive literature discussing the properties of, and numerical methods for, optimal sensor/actuator placement in  infinite dimensional spaces, see~\cite{fahroo2000optimal, optimal_Actuator_morris2011} and references therein.

In this  paper, we provide an exact characterization of the optimal sensors for Kalman filters by exhibiting a positive definite matrix they have to commute with. We furthermore provide a gradient algorithm---in fact, a Lax equation~\cite{lax1968integrals}--- 
to find such optimal sensors and prove its convergence  to the global optimum in a broad range of situations.  Finally, we demonstrate the efficacy of the methods proposed with simulations and provide a rule of thumb for choosing sensors that work best in low signal to noise ratio settings. We also believe that the  geometric analysis provided here sheds light on the intrinsic difficulty of the problem, difficulty that arises because the constraints on the number of observation signals and their signal to noise ratio are not convex.  The optimal sensor design problem  is equivalent to an optimal actuator placement problem, which we discuss in details below.

Closely related problems which might benefit from the point of view presented here include the optimal scheduling and design of the measurements~\cite{herring1974optimum}, the joint optimal measurement and control design~\cite{bansal1989simultaneous} or the control of complex systems~\cite{li2009designing}.   %

We now describe the problem and our results precisely. We start with a few conventions used throughout the paper. All square matrices are real $n \times n$ matrices unless otherwise specified. We denote by $I_n$ the $n \times n$ identity matrix, by $\Omega_{ij}$ the skew-symmetric matrix with zero entries everywhere except for the $ij$th and $ji$th ones, which are $1$ and $-1$ respectively, and  by $\Sigma_{ij}$ the symmetric matrix with zero entries everywhere except for the $ij$th and $ji$th ones, which are both one. We simply say norm of a vector to refer to its Frobenius norm. For $J$ a differentiable function on a manifold and $X$ a vector field on the same manifold, we let  $X \cdot J = dJ\cdot X$ be the directional derivative of $J$ along $X$. We denote by $\R^+$ the set of strictly positive real numbers. 
\subsection{Optimal sensor design.} Consider the linear stochastic differential equation

\begin{equation}
\label{eq:linsysest}
\left\lbrace \begin{aligned}
 dx(t) &= Ax(t) dt + Gdw(t)\\
 dy(t) &=  cx(t) dt + dv(t)
\end{aligned} \right.
\end{equation} where $w(t)$ and $v(t)$ are independent Wiener processes and  $A \in \R^{n \times n},  G \in \R^{n \times r}, c \in \R^{p \times n}$ and $d \in \R$. The  process $x(t)$ is called the state process and $y(t)$ the observation process. The matrix $c$ is the {\bf sensing} or {\bf observation matrix} of the system.   By an estimator for $x$ is meant a dynamical system with input $y(t)$ and whose state, call it $\hat x$,  is an estimate of $x$. 

The  Kalman filter is the optimal, in the mean-squared sense, estimator of the state $x(t)$ given past observations $y([0,t])$.  Given the matrices $A, G$ and $c$ as above, the Kalman filter in \emph{steady state} is
\begin{equation*}
\label{eq:kalmanfilt}
 d\hat x(t) = A \hat x(t) dt-Kc^\top(dy(t)-c \hat x(t) dt) +bu(t) dt 
\end{equation*}
where the matrix $K$ is the symmetric positive definite solution of the following \emph{Riccati} equation:
\begin{equation}
\label{eq:riccoverror}
 KA^\top  + AK  - K c^\top c K + GG^\top =0.
\end{equation} 
Not all sensing matrices are equal for the purpose of estimation. In fact,  it is not too hard to convince oneself that as the norm of $c$ increases, all other things being equal, the estimation error will decrease~\cite{Wredenhagen1993285}.  Keeping these observations in mind,  it is natural to seek the best sensing matrix  of a given norm. To make the statement more precise, denote by $\mathbb{E}$ the expectation operator. One can show that the gain matrix $K$ is also the steady-state covariance of the estimation error~\cite{optimalfiltering_anderson79} $K = \E\left( (x - \hat x)(x-\hat x )^\top\right)
$ and thus the trace of $K$ is nothing else than the MSE estimation error: $$\tr(K)=\sum_i\E((x_i -\hat x_i)^2).$$ We are thus led to the following optimal sensor design problem: \emph{minimize the trace of $K$, where $K$ obeys Eq.~\eqref{eq:riccoverror} over $c$ of  fixed norm.}

\subsection{Optimal actuator design.} In view of the  well-known duality between observability and controllability, it is not  surprising that the optimal actuator design problem takes a formulation similar to the optimal sensor design's. To wit, consider the linear time-invariant system
\begin{equation}
\dot x = Ax + bu.
\end{equation}
An optimal linear quadratic controller is a controller which minimizes the cost functional 
$$J(x)=\int_0^\infty \left(x(t)^TQx(t) +u^2(t)\right) dt,$$
given $x(0)=x$ and for a user-selected positive definite matrix $Q$. It is known that the optimal controller is a feedback controller of the form $u=-b^\top Kx$ where $K$ obeys the Riccati equation $$A^\top K +KA - Kbb^\top K+Q=0.$$ One can show that starting from an initial condition $x_0$, the ``cost of return to zero''  with the above controller is  $J(x_0)=x_0^\top K x_0$. A simple calculation shows that the expected cost of return to zero for an initial condition  $x$ distributed according to an arbitrary rotationally invariant distribution with density $g(r)dr$, where $r=\|x\|$,  is $$\E J =\tr(K) \int_0^\infty g(r)dr .$$  The question thus arises of finding the actuator $b$ that minimizes the trace of $K$. This actuator is the one returning the system to its desired state with the \emph{least effort on average}. As the norm of $b$ increases, the trace of $K$ decreases and we shall thus fix the norm of $b$. By optimizing  a broader class of functions below, our results also handle non-rotationally invariant distributions on the initial conditions.

\subsection{Main results.}

Having shown that optimal sensor and actuator design can both be cast as minimizing the trace of the positive definite solution of the Riccati equation, we pose the following optimization problem, which slightly generalizes the statement introduced above. We adopt the point of view of sensor design, and thus look for an optimal sensing matrix $c$. Without loss of generality, we will represent a sensing matrix by $\sqrt{\gamma} c$ where $\|c\|=\tr(c c^\top)=p$ and $\gamma >0$.  Throughout this paper, we use the notation $$C := c^\top c.$$ With a slight abuse of language, which will be justified below, \emph{we will refer to both $C$ and $c$ as  observation matrices}.  We call an observation matrix $c$ {\bf orthonormal} if $cc^\top = I_p$, where  $I_p$ is the $p \times p$ identity matrix. We say that $C$ is orthonormal if it can be written as $C=c^\top c$ with $c$ orthonormal. For $p=1$,  every observation vector is orthonormal and  the spectral decomposition of $C$ yields $c$ unambiguously. When $p >1$, $C$  defines an orthonormal $ c \in \R^{p \times n}$ up to a $p$ dimensional rotation, since for any $\Theta \in \R^{p\times p}$ with $\Theta^\top \Theta = I_p$, we have $C=c^\top c = (\Theta c)^\top (\Theta c)$. 

We define the cost function, for $L$ and $Q$ positive definite matrices,
\begin{equation}\label{eq:defJ}
J(\gamma,c) := \tr(LK)
\end{equation}
where   $K$ satisfies the Riccati equation
$$ AK +KA^\top -\gamma Kc^\top c K +Q = 0.$$ Note that $J(\gamma,\Theta c)=J(\gamma,c)$. We call an observation vector $c$ {\bf optimal} if it is a global minimizer of $J(\gamma,c)$ for $\gamma$ fixed, and we call it {\bf extremal} if it is a singular point of $J(\gamma,c)$, but not necessarily a minimum. We let $[A,B]=AB-BA$ be the commutator of two matrices $A$ and $B$.  A square matrix is called \emph{stable} if its eigenvalues have strictly negative real parts. For $c \in \R^{p \times n}$, we denote by $\vspan c$ the subspace of $\R^n$ spanned by the rows of $c$. A subspace $V$ of $\R^n$ is called an invariant subspace of $M \in \R^{n \times n}$ if $MV \subset V$. If $M$ is symmetric positive definite, all its $p$-dimensional invariant subspaces are spanned by $p$ eigenvectors of $M$. We refer to the $p$-dimensional invariant subspace of $M$ spanned by the  eigenvectors corresponding to the $p$ largest  eigenvalues  as  the  {\bf highest} $p$-dimensional invariant subspace of $M$.

 The main results of the paper are summarized  below: 
\begin{enumerate}
\item An observation matrix $c \in \R^{p \times n}$ is extremal if $\vspan c$ is an eigenspace of the positive definite matrix \begin{equation}
\label{eq:defM} M := KRK
\end{equation} 
where  $K$ and $R$ are the positive definite solutions of the equations
$$\begin{aligned}
A^\top K+KA -\gamma KCK +Q &=0\\
(A- \gamma CK) R+R(A- \gamma CK)^\top +L&=0.
\end{aligned}$$ \item\label{it:uniqstat} Generically for $L,Q$ symmetric positive definite, for $p=1$, $\gamma>0$ small and $A$ stable, there is a unique (up to a sign) optimal observation matrix $c$. 
\item\label{it:uniqvect} With the same assumptions as in item~\ref{it:uniqstat}, but for  $p \geq 1$, there is a unique optimal orthonormal observation matrix $C =c^\top c$ of rank $p$; it is such that $\vspan c$ is the highest  $p$-dimensional invariant subspace of $M$. 

\item\label{it:itemdiffeq} With the same assumptions as in item~\ref{it:uniqvect}, the differential equation $$\dot C =[C,[C,M]],$$ with $K,R$ as above converges from a set of measure one of initial conditions to an optimal observation matrix.
\end{enumerate}

\section{Optimal sensor and actuator placement.}\label{sec:gradflow}

\paragraph{On the Riccati equation.} The Riccati equation plays a central in the theory of linear systems, and much has been written about its properties. We only mention here, and  without proof, the facts needed to prove our results. A pair $(A,c)$ is called \emph{detectable} if there exists a matrix $D$ such that $A-c^\top D$ is stable.  If the pair $(A,c)$  is detectable and $Q$ is positive definite, the Riccati equation $A^\top K +KA -KCK +Q =0$  has a unique positive-definite solution. Moreover, this solution is such that $A-CK$ is a stable matrix~\cite{brocket_fdls}. In this paper, we will  restrict our attention to stable matrices $A$, in which case the pair $(A,c)$ is detectable regardless of $c$. We discuss this assumption in the last section. We gather the facts needed in the following result, which is essentially~\cite{delchamps_analytic1984}.
\begin{Lemma}\label{lem:limrK}
Let $A$ be a stable matrix and $Q$ a symmetric positive definite matrix. Let $C \in \R^{n\times n}$ be symmetric positive definite  with $\|C\|=1$ and let $\gamma \geq 0$. Then the positive definite solution $K$ of the Riccati equation
$A^\top K +KA-\gamma KCK +Q = 0$ is analytic with respect to $C$ and $\gamma$.
\end{Lemma}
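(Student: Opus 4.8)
The plan is to realize $K$ as an implicitly defined function and invoke the \emph{analytic} implicit function theorem. Define, on $\Sym(n)\times \Sym(n)\times \R$, the map
$$ F(K,C,\gamma) := A^\top K + KA - \gamma KCK + Q. $$
Each entry of $F$ is a polynomial in the entries of $K$ and $C$ and in $\gamma$, so $F$ is real-analytic jointly in all of its arguments, and it takes values in $\Sym(n)$ whenever $K$ is symmetric. Fix a base point $(C_0,\gamma_0)$ with $\gamma_0\geq 0$, $C_0$ symmetric positive definite of unit norm. By the facts recalled just before the lemma (detectability of $(A,c)$ is automatic since $A$ is stable, and $Q$ is positive definite), there is a unique positive definite $K_0$ with $F(K_0,C_0,\gamma_0)=0$, and moreover $\tilde A_0 := A-\gamma_0 C_0 K_0$ is stable.

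The key computation is the partial Fréchet derivative of $F$ in $K$. For a symmetric variation $H$ one has $D_KF(K_0)[H] = A^\top H + HA - \gamma_0(HC_0K_0 + K_0C_0H)$, and regrouping, using that $C_0$ and $K_0$ are symmetric, gives
$$ D_KF(K_0)[H] = \tilde A_0^\top H + H \tilde A_0, $$
the Lyapunov operator associated with $\tilde A_0$. The decisive point is that this operator is a bijection of $\Sym(n)$: it is invertible precisely when no two eigenvalues of $\tilde A_0$ sum to zero, and stability of $\tilde A_0$ forces every such sum to have strictly negative real part. (At $\gamma_0=0$ the Riccati equation degenerates to the Lyapunov equation $A^\top K + KA + Q=0$ and $\tilde A_0 = A$ is stable, so the same conclusion holds.)

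With invertibility of $D_KF(K_0)$ in hand, the analytic implicit function theorem produces a neighborhood of $(C_0,\gamma_0)$ on which there is a unique analytic map $(C,\gamma)\mapsto K(C,\gamma)$ solving $F=0$ with $K(C_0,\gamma_0)=K_0$. Since positive definiteness is an open condition and $K(\cdot,\cdot)$ is continuous, $K(C,\gamma)$ remains positive definite nearby; by the uniqueness of the positive definite solution it therefore coincides with the solution named in the lemma. As $(C_0,\gamma_0)$ was arbitrary, analyticity holds throughout the domain. To handle the boundary value $\gamma=0$, observe that $F$ extends analytically to slightly negative $\gamma$ and $\tilde A_0=A$ remains stable there, so the analytic branch extends across $\gamma=0$ and its restriction to $\gamma\geq 0$ is analytic; restricting the $C$-argument to the analytic submanifold $\{\|C\|=1\}$ preserves analyticity.

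The main obstacle is the second paragraph: identifying $D_KF$ with the Lyapunov operator and deducing its invertibility from the stability of $A-\gamma CK$ supplied by the cited Riccati facts. Once that is established, the remainder is a routine application of the analytic implicit function theorem, with the only delicate point being the extension across the boundary $\gamma=0$.
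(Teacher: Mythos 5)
Your proposal is correct, and it is worth noting that the paper itself offers no proof of this lemma: it is stated ``without proof'' as being essentially the result of the cited reference (Delchamps, 1984). Your argument is precisely the standard mechanism underlying that citation: the Riccati map $F(K,C,\gamma)=A^\top K+KA-\gamma KCK+Q$ is polynomial, hence real-analytic; its partial derivative in $K$ is the Lyapunov operator $H\mapsto \tilde A_0^\top H+H\tilde A_0$ with $\tilde A_0=A-\gamma_0 C_0K_0$ (your computation checks out, using $(CK)^\top=KC$ for symmetric $C,K$); stability of $\tilde A_0$ --- supplied for $\gamma_0>0$ by the stabilizing property of the positive definite Riccati solution recalled just before the lemma, and for $\gamma_0=0$ by stability of $A$ itself --- makes that operator a bijection of $\Sym(n)$ since its eigenvalues are the pairwise sums $\lambda_i+\lambda_j$; the analytic implicit function theorem then yields an analytic local branch, which openness of positive definiteness and uniqueness of the positive definite solution identify with the solution named in the lemma. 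Your handling of the boundary point $\gamma=0$ by extending the branch to slightly negative $\gamma$ is sound: one only needs analyticity of the branch there, not that it be a stabilizing Riccati solution for $\gamma<0$. Two minor observations: nothing in your argument uses positive definiteness of $C$ --- only symmetry and the stabilizing property, which holds since $A$ stable makes $(A,C)$ detectable for every $C$ --- which is fortunate, because the paper later applies the lemma to $C\in\Sym(n,p)$, which is merely positive semidefinite of rank $p$; and the final restriction to the sphere $\|C\|=1$ is indeed harmless, though also unnecessary, since you have established analyticity on an open neighborhood in $\Sym(n)\times\R$.
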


\paragraph{Real projective space and isospectral matrices.}

Denote by $SO(n)$ the special orthogonal group, that is the set of matrices $\Theta \in \R^{n \times n}$ such that $\Theta^\top\Theta = I_n$ and $\det(\Theta)=1$. We denote by $\mathfrak{so}(n)= \lbrace \Omega \in \R^{n \times n} \mid \Omega^\top = -\Omega \rbrace$  the Lie algebra of $SO(n)$ and we use the notation $$\ad_C A := [C,A]:=CA-AC.$$ Let $\Lambda$ be a diagonal matrix.  We denote by $\Sym(\Lambda)$ the orbit of the special orthogonal group $SO(n)$ acting on $\Lambda$ by conjugation; that is $$\Sym(\Lambda) = \left\lbrace C \in \R^{n \times n} \mid C = \Theta ^\top \Lambda \Theta \mbox{ for } \Theta \in SO(n) \right\rbrace.$$ The set $\Sym(\Lambda)$ is the set of all real symmetric matrices which can be diagonalized to $\Lambda$. We call $\Sym(\Lambda)$ an \textbf{isospectral manifold}. A simple computation shows that its tangent space $T_C\Sym(\Lambda)$ at a point $C$ is the following vector space:
\begin{equation}\label{eq:deftcsym}
T_C\Sym(\Lambda) = \left\lbrace [C,\Omega]=\ad_C \Omega \mid \Omega \in \mathfrak{so}(n) \right\rbrace.
\end{equation}

We will only consider the case here of $\Lambda$ having all entries zero or one.  Since we clearly have that  $\Sym(\Lambda)=\Sym(\Lambda')$ if and only if $\Lambda$ and $\Lambda'$ are conjugate, we can define unambiguously  $\Sym(n,p)$ to be the isospectral manifold with $\Lambda$ having $p$ entries one and $n-p$ entries zero on the diagonal. Note that if $C \in \Sym(n,p)$, then $C^2=C$ and $C$ is of rank $p$. Thus $\Sym(n,p)$ can be thought of as the space of rank $p$ orthogonal projectors in $\R^n$.   
The dimension of $\Sym(n,p)$ is easily seen to be $$\operatorname{dim} \Sym(n,p) = np-p^2.$$
 In particular,  $\Sym(n,1)$ is homeomorphic to the real projective space $\Rp(n-1)$. 

We now define the function (see Eq.~\eqref{eq:defJ}) $$\bar J(\gamma,C): \R^+ \times \Sym(n,p) \longmapsto \R:(\gamma,C) \longmapsto \tr(LK)$$ where $K$ is the positive definite solution to the Riccati equation of Lemma~\ref{lem:limrK}. 
With a slight abuse of notation, we will omit the bar over $J$ and write $J(\gamma,C)$ as well.

\paragraph{The normal metric.} The manifold $\Sym(\Lambda)$ possesses a  natural metric called the normal metric or Einstein metric. The main idea behind the definition of the normal metric, which has already been used  in engineering applications~\cite{Helmke:1994ec,brockett1991dynamical,brockett_diffgeomgradesign93}, is to embed $\Sym(\Lambda)$ in the Lie algebra $\fsun$ and use the so-called Killing form~\cite{Knapp:2002ca} on $\fsun$. Note that because $\Lambda$ is not an element of $\fson$, $\Sym(\Lambda)$ is not an adjoint orbit~\cite{atiyah_convexity82} of $SO(n)$. Furthermore, we wish to include the case of $\Lambda$ having repeated entries, which implies that the operator $[C,\cdot]$ (or $\ad_C$, as defined above) acting on $\fson$ is not invertible. We briefly sketch a construction of the normal metric here that emphasizes the properties we shall need below. We refer the reader to~\cite{atiyah_convexity82, brockett_diffgeomgradesign93} for a more careful construction. 

Denote by $\operatorname{im } \ad_C$ the image of $\ad_C$ and by $\ker \ad_C \subset \fson$ the kernel of $\ad_C$. From the definition of $T_C\Sym(\Lambda)$, we see that $\operatorname{im } \ad_C= T_C\Sym(\Lambda).$ The bilinear operator $$\kappa: \fson \times \fson: (\Omega_1,\Omega_2) \mapsto -\tr(\Omega_1\Omega_2)$$ is symmetric and positive definite. It can thus be used to define  the orthogonal complement $(\ker \ad_C)^\perp$ of $\ker \ad_C$ in $\fson$, which we identify with $\fson/\ker \ad_C$.  Using these facts, we can define the \emph{invertible map} $$\bar \ad_C: \fson/\ker \ad_C \mapsto \operatorname{im} \ad_C.$$ The normal metric $\kappa_n$ is defined, for $X,Y \in T_C\Sym(\Lambda)$, as
 
\begin{equation}
\label{eq:defnormmetric}\kappa_n(X,Y):= -\tr(\bar \ad_C^{-1}X\bar \ad_C^{-1}Y).
\end{equation} One can show that the normal metric is positive definite and non-degenerate. Moreover, we have that 
\begin{equation}
\label{eq:propadnormal}
\ad_C \bar \ad_C^{-1} X = X.
\end{equation}
Another property we shall need below is the $\emph{ad-invariance}$ of the trace, which refers to the following relation:
\begin{equation}\label{eq:defadinv}
\tr((\ad_C\Omega_1) \Omega_2) = - \tr(\Omega_1 \ad_C \Omega_2).
\end{equation}

We conclude this section by  describing an orthonormal basis of $T_C\Sym(n,p)$.
\begin{Lemma}\label{lem:isoorthobasis}
Let $1 \leq p \leq n$ and let $\mathcal{M}=\{m_1,m_3,\ldots,m_p\}$ with  $1 \leq m_1 < m_2 < \cdots <m_p\leq n$, all integers. Denote by $\overline{ \mathcal{M}}$ the complement of $\mathcal{M}$ in $\{1,2,\ldots,n\}$. Let $E\in \Sym(n,p)$ be the matrix with zero entries except for the diagonal entries $(i,i)$, $i \in \mathcal{M}$, which are one, that is $$E=\sum_{i\in {\mathcal{M}}} \Sigma_{ii}.$$ Then the matrices $\frac{1}{\sqrt{2}}\ad_E\Omega_{ij}$ for $i \in \mathcal{M}$ and $j \in\overline{ \mathcal{M}}$ form an orthonormal basis of $T_{E}\Sym(n,p)$.
\end{Lemma}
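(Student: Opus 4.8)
The plan is to reduce everything to an explicit entrywise computation, since $E$ is diagonal. Because $E_{kk}=1$ for $k\in\mathcal{M}$ and $E_{kk}=0$ otherwise, the identity $(E\Omega-\Omega E)_{kl}=(E_{kk}-E_{ll})\Omega_{kl}$ gives, for $i\in\mathcal{M}$ and $j\in\overline{\mathcal{M}}$, that $\ad_E\Omega_{ij}$ has entry $(E_{ii}-E_{jj})(\Omega_{ij})_{ij}=1$ in position $(i,j)$ and $(E_{jj}-E_{ii})(\Omega_{ij})_{ji}=1$ in position $(j,i)$, and zero elsewhere; that is, $\ad_E\Omega_{ij}=\Sigma_{ij}$. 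First I would record that each such matrix lies in $T_E\Sym(n,p)$ by the very description \eqref{eq:deftcsym} of the tangent space. The matrices $\Sigma_{ij}$ with $i\in\mathcal{M}$, $j\in\overline{\mathcal{M}}$ have pairwise disjoint supports, hence are linearly independent, and there are exactly $|\mathcal{M}|\cdot|\overline{\mathcal{M}}|=p(n-p)=np-p^2=\dim\Sym(n,p)$ of them, so they constitute a basis. It remains only to check orthonormality in the normal metric $\kappa_n$.

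Next I would pin down $\ker\ad_E$ and its orthogonal complement, which is the one place requiring care. The same entrywise formula shows $[E,\Omega]=0$ exactly when $\Omega_{kl}=0$ whenever $E_{kk}\neq E_{ll}$, so $\ker\ad_E$ consists of the skew-symmetric matrices supported on index pairs lying entirely inside $\mathcal{M}$ or entirely inside $\overline{\mathcal{M}}$. Since $\kappa(\Omega_1,\Omega_2)=-\tr(\Omega_1\Omega_2)=\sum_{k,l}(\Omega_1)_{kl}(\Omega_2)_{kl}$ is the Frobenius inner product on $\fson$, its orthogonal complement $(\ker\ad_E)^\perp$ is spanned precisely by the $\Omega_{ij}$ with one index in $\mathcal{M}$ and the other in $\overline{\mathcal{M}}$. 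In particular every $\Omega_{ij}$ appearing in the statement lies in $(\ker\ad_E)^\perp$, so by the definition of $\bar\ad_E$ as the restriction of $\ad_E$ to this complement we obtain the crucial identity $\bar\ad_E^{-1}(\ad_E\Omega_{ij})=\Omega_{ij}$.

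Finally I would evaluate the metric directly. Writing $X=\tfrac{1}{\sqrt2}\ad_E\Omega_{ij}$ and $Y=\tfrac{1}{\sqrt2}\ad_E\Omega_{kl}$ with $i,k\in\mathcal{M}$ and $j,l\in\overline{\mathcal{M}}$, the previous step gives $\bar\ad_E^{-1}X=\tfrac{1}{\sqrt2}\Omega_{ij}$ and $\bar\ad_E^{-1}Y=\tfrac{1}{\sqrt2}\Omega_{kl}$, so by \eqref{eq:defnormmetric} one has $\kappa_n(X,Y)=-\tfrac12\tr(\Omega_{ij}\Omega_{kl})=\tfrac12\kappa(\Omega_{ij},\Omega_{kl})$. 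A one-line computation of the Frobenius inner product yields $\kappa(\Omega_{ij},\Omega_{kl})=2\delta_{ik}\delta_{jl}$, the would-be cross term matching $(i,j)$ against $(l,k)$ being impossible since $i\in\mathcal{M}$ while $l\in\overline{\mathcal{M}}$; hence $\kappa_n(X,Y)=\delta_{ik}\delta_{jl}$, which is orthonormality. The main obstacle here is not any hard estimate but the bookkeeping around $\bar\ad_E^{-1}$: the identity $\bar\ad_E^{-1}\ad_E\Omega_{ij}=\Omega_{ij}$ holds only because each $\Omega_{ij}$ is genuinely $\kappa$-orthogonal to $\ker\ad_E$, and I would make sure to verify that orthogonality explicitly before invoking it.
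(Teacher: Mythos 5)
Your proof is correct, and at its core it is the same entrywise computation as the paper's: both reduce the lemma to the identity $\ad_E\Omega_{ij}=\Sigma_{ij}$ for $i\in\mathcal{M}$, $j\in\overline{\mathcal{M}}$, and to the trace evaluation $-\tr(\Omega_{ij}\Omega_{kl})=2\delta_{ik}\delta_{jl}$. Two differences in route are worth noting. For spanning, the paper characterizes $\ker\ad_E$ via the bracket formula $[\Sigma_{kk},\Omega_{ij}]$ and argues that the surviving commutators span $T_E\Sym(n,p)$; you instead observe that the matrices $\ad_E\Omega_{ij}=\Sigma_{ij}$ have pairwise disjoint supports, hence are linearly independent, and close the argument by counting $p(n-p)=np-p^2=\dim\Sym(n,p)$ --- a slightly cleaner way to obtain a basis, at the cost of invoking the dimension formula stated earlier in the paper. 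More substantively, you make explicit a step the paper compresses into ``a straightforward calculation'': the evaluation $\kappa_n(\ad_E\Omega_{ij},\ad_E\Omega_{kl})=-\tr(\Omega_{ij}\Omega_{kl})$ is legitimate only because $\bar\ad_E^{-1}(\ad_E\Omega_{ij})=\Omega_{ij}$, which in turn requires $\Omega_{ij}\in(\ker\ad_E)^\perp$ with respect to $\kappa$. Your verification that $\ker\ad_E$ consists of skew-symmetric matrices supported on index pairs lying entirely inside $\mathcal{M}$ or entirely inside $\overline{\mathcal{M}}$, so that the Frobenius-type form $\kappa$ makes every mixed-index $\Omega_{ij}$ orthogonal to the kernel, is precisely the justification the paper's one-line computation tacitly assumes; on this one delicate point your writeup is more complete than the original.
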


\begin{proof}
Recall that the tangent space at $E$ is spanned by the matrices $\ad_E\Omega$ for $\Omega \in \fson$. Note that the $\Omega_{ij}$, for $i\neq j$ span $\fson$. Hence, to show that the $\Omega_{ij}$ with $i\in \mathcal{M}$ and $j \in\overline{ \mathcal{M}}$ span the tangent space, it is sufficient to show that
$\Omega_{ij} \in \ker \ad_E$ if and only the conditions $i \in \mathcal{M}$ and $j \in\overline{ \mathcal{M}}$ are not satisfied.  But a short calculation shows that \begin{equation}
\label{eq:liebSO}
[\Sigma_{kk},\Omega_{ij}] = \left\lbrace \begin{aligned} \Sigma_{ij} &\mbox{ if } i=k \\
-\Sigma_{ij} &\mbox{ if } j=k\\
0 &\mbox{ otherwise}.
\end{aligned}\right.\end{equation} We conclude from~\eqref{eq:liebSO} that $\ad_E \Omega_{ij} \neq 0$ only if either $i \in \mathcal{M}, j \in \overline{ \mathcal{M}}$ or $i \in \overline{ \mathcal{M}}, j \in \mathcal{M}$.  Since $\Omega_{ij}=-\Omega_{ji}$, the vectors $\ad_E\Omega_{ij}$ with $i \in \mathcal{M}, j \in \overline{ \mathcal{M}}$ span $T_{E_m} \Sym(n,p)$. We now show that these vectors are orthonormal for the normal metric. Again, a straightforward calculation shows that  $$\kappa_n(\ad_E\Omega_{ij},\ad_E\Omega_{kl})= -\tr(\Omega_{ij}\Omega_{kl}) = 2 \delta_{ik}\delta_{jl}$$ where $\delta_{ik} =1$ if $i=k$ and $0$ otherwise. This proves the claim.
\end{proof}

\subsection{Gradient flow for optimal sensor design}\label{ssec:gradient} We now evaluate the gradient flow of $J=\tr(LK)$ with respect to the normal metric.  Fix $C \in \Sym(n,p)$ such that $(A,C)$ is detectable and let $K$ be the corresponding positive definite solution of the Riccati equation. Recall that the gradient of $J$ evaluated at $C$, denoted by $\nabla J(C)$ obeys the relation~\cite{jost_geometricanalysis}
\begin{equation}
\label{eq:graddef}\kappa_n(\nabla J(C), X) = dJ \cdot X,\mbox{ for all } X \in T_{C}\Sym(\Lambda).
\end{equation} Let $C(t)$, be a differentiable curve in $\Sym(\Lambda)$  defined for $|t|<\varepsilon$ small and such that $C(0)=C$ and $\ddto C(t)=X$.   We can choose $\varepsilon$ small enough so that $(A,C(t))$ is detectable for all $|t|<\varepsilon$. From Lemma~\ref{lem:limrK}, we conclude that  for all such $t$,  there exists a unique positive definite solution $K(t)$ to the algebraic Riccati equation $A^\top K(t)+K(t) A -\gamma K(t)C(t) K(t) +Q =0$ and that the curve $K(t)$ is differentiable in $t$. Then \begin{equation}\label{eq:derivgrad1}
dJ \cdot X = \ddto J(C(t)) = \tr(L\ddto K(t))
\end{equation} Differentiating the Riccati equation, and writing $\dot K$  for $\ddto K$, we obtain

$$ A^\top \dot K+\dot K A -\gamma \dot K C K -\gamma K XK- \gamma K C \dot K  =0.$$ The above equation is a Lyapunov equation~\cite{brocket_fdls}, which we can write as
\begin{equation}
\label{eq:dKd} (A-\gamma CK)^\top \dot K + \dot K (A- \gamma CK) - \gamma K X K = 0
\end{equation} 
and whose solution is  \begin{equation}
\label{eq:defdkint}\dot K= -\gamma\int_0^\infty e^{(A-\gamma CK)^\top t}   K X K e^{(A-\gamma CK)t} dt. \end{equation}

Using the definition of $\kappa_n$ from Eq.~\eqref{eq:defnormmetric}, we obtain by plugging~\eqref{eq:defdkint} into~\eqref{eq:graddef}
\begin{equation*}\tr(\bar \ad_{C}^{-1} \nabla J \bar \ad_{C}^{-1} X ) =\gamma \tr( L\int_0^\infty e^{(A-\gamma CK)^\top t}  K X K e^{(A-\gamma CK)t} dt)\end{equation*}

From~\eqref{eq:deftcsym}, we can write $X=\ad_{C}\Omega$ for some $\Omega \in  (\ker \ad_C)^\perp$. Using the cyclic and  ad-invariance properties of the trace, the last equation can be rewritten as
\begin{equation*}\tr(\bar \ad_{C}^{-1}( \nabla J) \Omega ) = \gamma\tr( \ad_{C}\left[  K \int_0^\infty e^{(A-\gamma CK)t}L\right. \\ \left. e^{(A-\gamma CK)^\top t}dt  K\right] \Omega). \end{equation*} The above equation holds for all $\Omega \in  (\ker \ad_C)^\perp$ and thus
$$\bar \ad_{C}^{-1} \nabla J = \gamma \ad_{C}\left[ K  \int_0^\infty e^{(A-\gamma CK)t}Le^{(A-\gamma CK)^\top t}dt  K\right] + \Delta$$ for some $\Delta \in ((\ker \ad_C)^\perp)^\perp = \ker \ad_C$. Taking $ \ad_{C}$ on both sides of the last relation, we obtain $\nabla J =\gamma \ad_C \ad_C KRK
$where $R$ is the solution of the Lyapunov equation
$(A-\gamma CK)R + R(A-\gamma CK)^\top +L =0$. We summarize these calculations in the following Theorem:

\begin{Theorem}\label{th:equilgradientflow}The gradient flow of the function $J(\gamma,C)=\tr(LK)$ with respect to the normal metric and for $\gamma>0$ fixed is $$\dot C = \gamma [C,[C,M]]$$ where $M=KRK$ and $K$, $R$ obey the equations \begin{equation}\label{eq:eqKR}\begin{aligned}
A^\top K+KA +Q -\gamma KCK&=0\\
(A-\gamma CK)R+R(A-\gamma CK)^\top +L &= 0
\end{aligned}.\end{equation} Moreover, an observation matrix $C \in \Sym(n,p)$ is extremal if   it is an orthogonal projection onto a $p$-dimensional invariant subspace of $M$. Equivalently, an orthonormal observation matrix $c \in \R^{p \times n}$  is extremal  if $\vspan c$ is an eigenspace of $M$.
\end{Theorem}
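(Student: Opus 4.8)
The first assertion of the statement is exactly what the computation preceding it establishes: substituting the Lyapunov solution \eqref{eq:defdkint} into the gradient relation \eqref{eq:graddef} and simplifying with the cyclic and ad-invariance \eqref{eq:defadinv} properties of the trace together with \eqref{eq:propadnormal}, one arrives at $\bar\ad_C^{-1}\nabla J = \gamma\,\ad_C M + \Delta$ with $M=KRK$ and $\Delta\in\ker\ad_C$; applying $\ad_C$ and using $\ad_C\Delta=0$ gives $\nabla J=\gamma\,\ad_C\ad_C M=\gamma[C,[C,M]]$, whence the flow. So the substantive content left to prove is the characterization of the extremal (equivalently, critical) points, and by definition $C\in\Sym(n,p)$ is extremal if and only if $\nabla J=0$, that is $[C,[C,M]]=0$.

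The plan is to show that the outer bracket introduces no spurious critical points, i.e. that $[C,[C,M]]=0$ is equivalent to $[C,M]=0$. Set $N:=[C,M]=\ad_C M$. Since $C$ and $M$ are both symmetric, $N^\top=-N$, so $N\in\fson$. I would first verify that $N\in(\ker\ad_C)^\perp$: for every $\Delta\in\ker\ad_C$, the ad-invariance \eqref{eq:defadinv} gives $\kappa(N,\Delta)=-\tr\bigl((\ad_C M)\Delta\bigr)=\tr\bigl(M(\ad_C\Delta)\bigr)=0$. Because $\bar\ad_C$ is the invertible restriction of $\ad_C$ to $(\ker\ad_C)^\perp$, the equation $\ad_C N=[C,[C,M]]=0$ together with $N\in(\ker\ad_C)^\perp$ forces $N=0$; the converse implication is immediate. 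Hence $C$ is extremal if and only if $[C,M]=0$, i.e. $C$ and $M$ commute.

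It then remains to translate $[C,M]=0$ into the geometric statement. Since $C\in\Sym(n,p)$ is the orthogonal projector onto $\vspan c=\operatorname{im}C$, commuting with $M$ is equivalent to $\operatorname{im}C$ being an invariant subspace of $M$; and as $M=KRK$ is symmetric positive definite, any such $p$-dimensional invariant subspace is spanned by $p$ eigenvectors of $M$, which is precisely the assertion that $\vspan c$ is an eigenspace of $M$. I would make the commutation/invariance equivalence concrete by choosing an orthonormal basis adapted to the splitting $\operatorname{im}C\oplus\ker C$, in which $C=\operatorname{diag}(I_p,0)$ and $M$ is block-decomposed; then $[C,M]$ retains only the off-diagonal blocks $M_{12}$ and $-M_{12}^\top$, which vanish precisely when $\operatorname{im}C$ is $M$-invariant. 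This same block computation is in fact the quickest self-contained route to the reduction of the previous paragraph, since it shows that $[C,[C,M]]$ and $[C,M]$ are both governed by the single block $M_{12}$ and hence vanish simultaneously.

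The one genuinely nontrivial step, and the place I would be most careful, is the passage from $[C,[C,M]]=0$ to $[C,M]=0$: a priori the outer $\ad_C$ could annihilate a nonzero $N$. The resolution is that $N=\ad_C M$ automatically lies in the subspace $(\ker\ad_C)^\perp$ on which $\ad_C$ is injective (equivalently, $N$ has only off-diagonal blocks in the basis above), so no such cancellation can occur. I would finally remark that $M=KRK$ itself depends on $C$ through $K$ and $R$, so the characterization is a self-consistency (fixed-point) condition on $C$ rather than a linear eigenproblem; this dependence, however, plays no role in any step of the argument above, which fixes $C$ and only uses that $M(C)$ is symmetric positive definite.
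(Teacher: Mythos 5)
Your proposal is correct and follows the same route as the paper's proof: the flow formula is exactly the computation preceding the theorem, and the extremality characterization rests on reducing $\nabla J = 0$ to $[C,M]=0$ and then invoking the fact that symmetric matrices commute precisely when they share invariant subspaces. Where you genuinely add something is the reduction step itself, which the paper passes over silently (it asserts ``$C$ is extremal if and only if $[C,M]=0$'' without comment, even though the gradient is $\gamma[C,[C,M]]$, not $\gamma[C,M]$): you close this gap twice over, once abstractly by showing via ad-invariance of the trace that $N=[C,M]$ lies in $(\ker \ad_C)^\perp$, on which $\ad_C$ is injective, and once concretely by the block decomposition adapted to $\operatorname{im} C \oplus \ker C$, in which $[C,M]$ and $[C,[C,M]]$ are both governed by the single off-diagonal block $M_{12}$ and hence vanish simultaneously; both arguments are sound, and the block computation is the more economical. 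The only item you state without justification is that $M=KRK$ is symmetric positive definite: the paper supplies the one-line reason, namely that the stabilizing solution of the Riccati equation renders $A-\gamma CK$ stable, so with $L$ positive definite the Lyapunov solution $R$ is positive definite and therefore so is $KRK$. You should add that line, or note that symmetry of $M$ alone already suffices for the commutation/invariant-subspace equivalence, positive definiteness being needed only for the paper's convention that invariant subspaces of $M$ are spanned by eigenvectors and for the later identification of the highest eigenspace. Your closing remark that the characterization is a self-consistency condition in $C$, since $M$ depends on $C$ through $K$ and $R$, is accurate and consistent with how the paper uses the result.
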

\begin{proof}
The first part of the statement was  proven above. We thus focus on the second part. Recall that extremal points of $J$ are zeros of its gradient, and thus $C$ is extremal if and only if $[C,M]=0$. Because the positive definite solution of the Riccati equation is such that the matrix is $(A-\gamma CK)$ stable and because  $L$ is symmetric positive definite, we have that $R$ is positive definite and thus so is the product $KRK=:M$.  The result is now a consequence of the fact that  symmetric matrices commute if and only if they have the same invariant subspaces.
\end{proof}
\begin{Remark}
It is tempting to conjecture  that if $c_1$ is an extremal observation vector, and $K_1$ and $R_1$ are the corresponding solutions of Eq.~\eqref{eq:eqKR} above, then any eigenvector of $K_1R_1K_1$ is also extremal. This however is not the case.
\end{Remark}

\subsection{The extremal points of J}\label{ssec:hessian}We have derived in the previous section the gradient of $J$. Because $J$ is a lower-bounded function defined on a compact domain, it is clear that the gradient flow will converge to the set of extremal points of $J$.  However, $J$ is not convex and thus we do not have, a priori, convergence to the global minimum of $J$. We show that for $\gamma$ small $J$ has  a unique minimum, a unique maximum and that the other extremal points are finite in number and saddle points. This shows that, in that regime, \emph{the gradient flow will essentially   converge to the global minimum.} We will discuss in the last section how small $\gamma$ needs to be in practice.

We prove the result in two steps: first, we show that there is a finite number of extremal points for $\gamma$ small and then we evaluate their signatures.  Recall that the signature of an extremal point $C$ of $J$ is a triplet of integers $(n_+,n_-,n_0)$, where $n_+$ (resp. $n_-$ and $n_0$) denotes the number of positive (resp. negative, zero) eigenvalues  of the Hessian of $J$ at $C$. The proof of the first item goes by studying the  parametrized  family of vector fields $$F(\gamma,c) = [C,M].$$ When $\gamma>0$, $F$ and $\nabla J$ clearly have the same zeros.  We then show that $F(0,c)$ has  exactly ${n \choose p}$ zeros and that these zeros persist for $\gamma>0$ small. We denote by $\{C_i(\gamma)\}$, $i \in \mathcal{I}(\gamma)$, the set of zeros of $F(\gamma,C)$, where the index set $\mathcal{I}(\gamma)$ is possibly infinite. 

\paragraph{$J$ has a finite number of extremal points.}Let  $\<\cdot,\cdot\>$ be the normal metric on $\Sym(n,p)$. Recall that the Levi-Civita connection is the unique connection that is \emph{compatible with the metric} and \emph{torsion free}~\cite{jost_geometricanalysis}; we denote it by $\nabla$.

\begin{Proposition}\label{prop:finitenumequ}
Let $A$ be a stable matrix. For $\gamma >0$ small and generically for $Q, L$ positive definite matrices, the function $$J(\gamma,C): \R^+ \times \Sym(n,p) \longmapsto \R: J(\gamma,C)=\tr(LK)$$ where $K$ satisfies the Riccati equation~\eqref{eq:eqKR} has exactly ${n \choose p}$ extremal points. 

\end{Proposition}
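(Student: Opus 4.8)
The plan is to execute the two-step strategy announced before the statement: first count the zeros of $F(\gamma,C)=[C,M]$ at $\gamma=0$, where the Riccati equation degenerates into a pair of Lyapunov equations, and then transfer the count to small $\gamma>0$ by an implicit-function-theorem and compactness argument. A preliminary observation makes everything well-posed: $F$ is not merely $\fson$-valued but is a section of the rank-$(\dim\Sym(n,p))$ vector bundle $\mathcal N$ whose fiber over $C$ is $(\ker \ad_C)^\perp$. Indeed, for $\Omega\in\ker \ad_C$ one has $\kappa([C,M],\Omega)=-\tr([C,M]\Omega)=\tr(M[C,\Omega])=0$, so $[C,M]\in(\ker \ad_C)^\perp$. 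Moreover, by Theorem~\ref{th:equilgradientflow} together with the identity $\tr([C,[C,M]]M)=\|[C,M]\|^2$ (which forces $[C,[C,M]]=0\Rightarrow[C,M]=0$), the zeros of the section $F(\gamma,\cdot)$ coincide with the extremal points of $J$ for every $\gamma>0$, so it suffices to count them.

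At $\gamma=0$ the Riccati equation of Lemma~\ref{lem:limrK} reduces to $A^\top K+KA+Q=0$ and the auxiliary Lyapunov equation to $AR+RA^\top+L=0$, whose solutions $K_0,R_0$ are positive definite and, crucially, \emph{independent of $C$}. Hence $M_0:=K_0R_0K_0$ is a fixed positive definite matrix, and $F(0,C)=[C,M_0]$ vanishes exactly when $C$ commutes with $M_0$, i.e. when $C$ is the orthogonal projection onto a span of $p$ eigenvectors of $M_0$. The genericity hypothesis enters precisely here: since $K_0$ and $R_0$ depend analytically (in fact polynomially) on $Q$ and $L$, so does the discriminant of the characteristic polynomial of $M_0$; exhibiting a single pair $(L,Q)$ for which $M_0$ has simple spectrum shows this discriminant is not identically zero, hence $M_0$ has $n$ distinct eigenvalues for $(L,Q)$ in an open dense set of full measure. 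For such $(L,Q)$ there are exactly $\binom{n}{p}$ rank-$p$ projections commuting with $M_0$, so $F(0,\cdot)$ has exactly $\binom{n}{p}$ zeros.

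Next I would verify that each of these zeros is nondegenerate. At a zero $E=\sum_{i\in\mathcal M}\Sigma_{ii}$, using the orthonormal basis $\{\tfrac1{\sqrt2}\ad_E\Omega_{ij}\}_{i\in\mathcal M,\,j\in\overline{\mathcal M}}$ of $T_E\Sym(n,p)$ from Lemma~\ref{lem:isoorthobasis}, one has $\ad_E\Omega_{ij}=\Sigma_{ij}$, and in the eigenbasis of $M_0=\operatorname{diag}(\mu_1,\dots,\mu_n)$ a one-line bracket computation gives $D_CF(0,E)\,\Sigma_{ij}=[\Sigma_{ij},M_0]=(\mu_j-\mu_i)\Omega_{ij}$. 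Since the $\Omega_{ij}$ with $i\in\mathcal M,\,j\in\overline{\mathcal M}$ form a basis of the fiber $\mathcal N_E$ and each $\mu_j-\mu_i\neq0$ by simplicity of the spectrum, the vertical derivative $D_CF(0,E)\colon T_E\Sym(n,p)\to\mathcal N_E$ is a linear isomorphism; every $\gamma=0$ zero is therefore nondegenerate.

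Finally I would apply the implicit function theorem to the analytic family of sections $F(\gamma,\cdot)$: around each $E_i$ the isomorphism above yields a unique zero $C_i(\gamma)$ for $|\gamma|$ small, producing at least $\binom{n}{p}$ extremal points. To exclude extra zeros I would use compactness of $\Sym(n,p)$: if there were zeros $C_k$ of $F(\gamma_k,\cdot)$ with $\gamma_k\to0$ staying outside fixed neighborhoods of the $E_i$, a convergent subsequence would limit to a zero of $F(0,\cdot)$ distinct from all $E_i$, contradicting the count above. Hence for $\gamma$ small enough $F(\gamma,\cdot)$ has exactly $\binom{n}{p}$ zeros. I expect the main obstacle to be the genericity step, namely justifying that $M_0$ has simple spectrum for generic $(L,Q)$ by exhibiting one explicit admissible pair and running the analytic-discriminant argument, together with the bookkeeping needed to treat $F$ as a section of $\mathcal N$ so that "nondegenerate zero" and the implicit function theorem are meaningful; the nondegeneracy computation itself is, pleasantly, immediate once the basis of Lemma~\ref{lem:isoorthobasis} is in hand.
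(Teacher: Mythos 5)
Your proposal is correct and follows the same two-step skeleton as the paper's proof (count the zeros of $F(\gamma,C)=[C,M]$ at $\gamma=0$, where $M_0=K_0R_0K_0$ is independent of $C$ and generically has simple spectrum, then show each of the ${n \choose p}$ zeros is a nondegenerate zero of the linearization and persists for small $\gamma$), but two sub-steps are executed by genuinely different means, and both variants are sound. First, for genericity the paper invokes Lemma~\ref{lem:genQL}, proved by a local-surjectivity/Sard argument for the map $(Q,L)\mapsto KRK$; you instead observe that the discriminant of the characteristic polynomial of $M_0$ is a polynomial in $(Q,L)$ and apply the identity theorem after exhibiting one witness. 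Your route is more elementary, and the one loose end you flag closes easily: since $S^+\times S^+$ is open, the witness need not be admissible, so one may choose $K_0$ diagonal with distinct positive entries and $R_0=I_n$ and define $Q:=-(A^\top K_0+K_0A)$, $L:=-(A^\top+A)$ (symmetric, not necessarily positive definite), giving $M_0=K_0^2$ with simple spectrum; a polynomial nonvanishing at one point cannot vanish identically on the open set $S^+\times S^+$. Second, for nondegeneracy the paper uses the Levi-Civita covariant derivative via Lemma~\ref{lem:covderivF}, obtaining $-\frac{1}{2}[M_0,[C,\Omega_x]]$ at a zero, whereas you use the flat vertical derivative of $F$ viewed as a section of the bundle with fiber $(\ker\ad_C)^\perp$, obtaining $[\Sigma_{ij},D]=(\mu_j-\mu_i)\Omega_{ij}$ in the basis of Lemma~\ref{lem:isoorthobasis}; at a zero of a section all connections give the same answer up to the irrelevant constant factor, so the conclusions agree. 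Your bundle framing in fact repairs a minor imprecision in the paper, which declares $F$ to be $T\Sym(n,p)$-valued even though $[C,M]$ is skew-symmetric while tangent vectors $[C,\Omega]$ are symmetric; your identity $\tr([C,[C,M]]M)=\|[C,M]\|^2$ likewise supplies the justification, left implicit in the paper, that zeros of $F$ and of $\nabla J=\gamma[C,[C,M]]$ coincide for $\gamma>0$. Finally, you make explicit the implicit-function-theorem-plus-compactness argument (relying on the joint analyticity of Lemma~\ref{lem:limrK} at $\gamma=0$) that the paper compresses into the assertion that nondegeneracy of $\nabla F$ at the points $(0,C_i(0))$ suffices to give $\mathcal{I}(\gamma)=\mathcal{I}(0)$; the compactness half, ruling out zeros appearing away from the $C_i(0)$, is genuinely needed for the word ``exactly'' and is a worthwhile addition.
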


\begin{proof}

We introduce the following vector field: $$ F: [0,\infty) \times \Sym(n,p) \longmapsto T\Sym(n,p) : (\gamma,C) \longmapsto [C,M]$$ where $M=KRK$ with $R$ and $K$ obeying Eq.~\eqref{eq:eqKR}. One should think of $F(\gamma,C)$ as a parametrized family of vector fields on $\Sym(n,p)$.    We denote by $K_0$ and $R_0$ the solutions of $$A^\top K+KA+Q=0$$ and $$AR+RA^\top+L=0$$ respectively. Set $M_0:=K_0R_0K_0$. For $\gamma=0$ and generically for $Q$ and $L$ positive definite, we conclude from Lemma~\ref{lem:genQL}  (see Appendix) that $M_0$ has $n$ distinct eigenvalues.  Because symmetric matrices commute if and only if they have the same eigenvectors,  there are exactly ${n \choose p}$  matrices $C \in\Sym(n,p)$  which commute with $M_0$. Thus $\mathcal{I}(0)$ contains ${n \choose p}$ elements, say $\mathcal{I}(0)=\{1,2,\ldots, {n \choose p}\}$. Denote by $C_i(0)$ the corresponding zeros of $F$. 

Recall that  $\nabla F$ is the covariant derivative of $F$ where $\nabla$ is the Levi-Civita connection associated to the normal metric. In order to show that for $\gamma>0$ small enough, $\mathcal{I}(\gamma)=\mathcal{I}(0)$, it is sufficient to show that the   linear map $\nabla F: T_C\Sym(n,p) \longmapsto T_C\Sym(n,p): X \longmapsto \nabla_X F(0,C_i) $ is non-degenerate at the ${n \choose p}$ points $(0,C_i(0))$. 
From Lemma~\ref{lem:covderivF}, and for $\Omega_x$ such that $X = [C,\Omega_x]$ we have
$$\nabla_X F=-\frac{1}{2}\left([M_0,[C,\Omega_x]]+[\Omega_x,[C,M_0]]\right).$$
When $C=C_i(0)$ for some $i$,  the second term vanishes. We are left with 
$$ \nabla_X F(0,C_i) =-\frac{1}{2} [M_0,[C,\Omega_x]].$$
We now show that the  covariant derivative is non-degenerate. For this, we need the two following facts:  first,  for any orthogonal matrix $\Theta \in SO(n)$, the conjugation map $$\Ad_\Theta:\fson \longmapsto \fson: \Omega \longmapsto \Theta^{-1} \Omega\Theta $$ has $\Ad_{\Theta^{-1}}$ for inverse and is consequently surjective onto $\fson$. Second, for arbitrary matrices $\Theta \in SO(n)$ and $A,B \in \R^{n \times n}$, \begin{equation}
\label{eq:thetabracket}
\Ad_\Theta [A,B] = [\Ad_\Theta A , \Ad_\Theta B].\end{equation}   Using these facts, we conclude that $\nabla_XF(0,C_i)$ is non-degenerate if and only if the linear map $$ X \longmapsto \Ad_\Theta \nabla_XF(0,C_i) =[\Theta M_0 \Theta^\top,[\Theta C \Theta^\top,\Omega_x]]$$ is non-degenerate.

Because $M_0$ has  exactly $n$ orthonormal eigenvectors, we can let $\Theta$ be the orthogonal matrix whose columns are  eigenvectors of $M_0$. With this choice of $\Theta$, the previous equation reduces to $$\Ad_\Theta \nabla_XF(0,C_i) = [D,[E,\Omega_x]]=\ad_D\ad_E\Omega_x,$$
where $E$ is a matrix with zero entries except for $p$ diagonal entries which are equal to $1$ and $D$ is a diagonal matrix with the eigenvalues of $M_0$ on its diagonal. A short calculation shows that the commutator $\ad_D A$ of a diagonal matrix with diagonal entries $d_i$ and a matrix $A=(a_{ij})$ has entry $ij$ equal to $a_{ij}(d_i-d_j)$. Since the $d_i$ are distinct, we deduce  that $\ad_D$ is full rank. Thus  $\Ad_\Theta \nabla_XF(0,C_i)$ is of full rank.
\end{proof}

\paragraph{The signature of the extremal points of $J$.} We now evaluate the signature of the extremal points of $J$. We denote by $d^2J$ the \emph{Hessian} of the function $J$ with respect to the normal metric;  it is a symmetric, bilinear form on $T\Sym(n)$  and for the vector fields $X$ and $Y$, it  is given by~\cite{jost_geometricanalysis}
\begin{equation}
\label{eq:defHess}d^2J(X,Y) = X \cdot Y \cdot J - \nabla_X Y \cdot J.
\end{equation}

The choice of connection does not affect the type of extremal points of $J$ of course, but it is convenient to fix a connection for the perturbation argument that will be used below. Also note that the   Hessian can be used to accelerate gradient flows or algebraic equation solvers~\cite{yuan2008step}.

\begin{restatable}{Proposition}{propcomphess}
\label{prop:compHessian} Let $J=\tr(LK)$ be defined as in Theorem~\ref{th:equilgradientflow}. Let $X = \ad_C \Omega_x$ and $Y=\ad_C\Omega_y$ for $\Omega_x,\Omega_y \in \fson$. The Hessian of $J$ with respect to the normal metric is 
\begin{equation}
\label{eq:HessFcond}\begin{aligned}
d^2J(X,Y) = \gamma\tr \big\{[ C,\Omega_x][M,\Omega_y] + [ C,VRK+KWK+KRV]\Omega_y\\
 -\frac{1}{2} [ C,M][\Omega_x,\Omega_y]\big\}
\end{aligned}
\end{equation} where $K$ and $R$ are as in the statement of Theorem~\ref{th:equilgradientflow} and $V$ and $W$ are $$V =\gamma \int_0^\infty e^{(A- CK)t} K[ C,\Omega_x] K e^{(A-\gamma CK)^\top t}dt$$ and 

$$ W=\gamma \int_0^\infty e^{(A-\gamma CK)t} \left( [ C,\Omega_x]K- CV- VC-K[ C,\Omega_x]\right)e^{(A-\gamma CK)^\top t}dt.$$ 
\end{restatable}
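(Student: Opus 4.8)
The plan is to compute the Hessian $d^2J(X,Y)$ directly from its coordinate-free definition in Eq.~\eqref{eq:defHess}, namely $d^2J(X,Y) = X\cdot Y\cdot J - \nabla_X Y \cdot J$, and to reduce everything to traces involving $C$, $K$, $R$, and the auxiliary matrices $V$ and $W$. The first term $X\cdot Y\cdot J$ is the iterated directional derivative of $J$. Since we already know from Theorem~\ref{th:equilgradientflow} that the first derivative is $Y\cdot J = dJ\cdot Y = \gamma\tr([C,M]\Omega_y)$ (or equivalently a trace expression obtained by pairing $\nabla J = \gamma[C,[C,M]]$ with $Y$), I would differentiate this expression a second time along the curve $C(t)$ with $\ddto C(t)=X$. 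The key observation is that $M=KRK$ depends on $C$ through both $K$ and $R$, so differentiating $M$ produces three terms, $\dot M = \dot K RK + K\dot R K + KR\dot K$; the quantities $V$ and $W$ are precisely the solutions capturing $\dot K$ and $\dot R$ respectively. Indeed, $V$ is exactly the integral representation of $\dot K$ already derived in Eq.~\eqref{eq:defdkint} with $X=[C,\Omega_x]$, and $W$ should emerge as the analogous solution of the Lyapunov equation one gets by differentiating the second equation in~\eqref{eq:eqKR} (the one defining $R$), whose forcing term involves both $[C,\Omega_x]K - K[C,\Omega_x]$ and the $C$-derivative acting on the $CK$ factors, hence the $-CV-VC$ contributions.

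The structure of the three terms in Eq.~\eqref{eq:HessFcond} strongly suggests the bookkeeping. First I would establish that differentiating $Y\cdot J$ yields a term in which $M$ is held fixed and $\Omega_y$ is differentiated implicitly (giving rise to the $[C,\Omega_x][M,\Omega_y]$ term once one accounts for how the tangent vector $Y=\ad_C\Omega_y$ varies with $C$), plus a term in which $\dot M$ is inserted, giving $[C, \dot M]\Omega_y = [C, VRK + KWK + KRV]\Omega_y$. For this, I would use the ad-invariance~\eqref{eq:defadinv} and the cyclicity of the trace to move brackets around and present the result in the stated symmetric form. The factor $[C,\Omega_x]$ appearing in the first summand reflects the change of the base point $C$ entering through $\dot C = X = [C,\Omega_x]$.

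The second term in~\eqref{eq:defHess}, namely $-\nabla_X Y\cdot J$, is where the choice of the Levi-Civita connection of the normal metric enters. Pairing $\nabla J$ with $\nabla_X Y$ and using $\nabla J = \gamma[C,[C,M]]$, I would show this contributes the last summand $-\tfrac{1}{2}\gamma\tr([C,M][\Omega_x,\Omega_y])$. The factor $\tfrac{1}{2}$ and the commutator $[\Omega_x,\Omega_y]$ are the signature of the Levi-Civita connection on an isospectral manifold expressed in the normal metric; concretely I would invoke (or re-derive, in the spirit of Lemma~\ref{lem:covderivF}) the formula $\nabla_X Y = \tfrac{1}{2}[C,[\Omega_x,\Omega_y]] + (\text{terms from } \partial_X\Omega_y)$, and observe that only the bracket part survives after pairing with $\nabla J$, because the remaining part lands in directions already accounted for in the first term or annihilated by the trace against $[C,M]$.

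The main obstacle I anticipate is the careful derivation of the Lyapunov equation governing $\dot R$ and hence the precise form of $W$. Unlike $\dot K$, whose forcing term is the single clean expression $-\gamma KXK$, the equation for $R$ in~\eqref{eq:eqKR} has $C$ appearing inside the generator $(A-\gamma CK)$ as well as through $K$, so differentiating produces cross terms $-\gamma([C,\Omega_x]K + C\dot K)R - \ldots$; disentangling these and recognizing that they assemble into the forcing term $[C,\Omega_x]K - CV - VC - K[C,\Omega_x]$ displayed in the statement requires meticulous tracking of which matrices are symmetric and where transposes fall. A secondary difficulty is ensuring that the connection-dependent contribution is isolated cleanly so that the final three-term expression is manifestly correct; I would verify consistency by checking that at an extremal point, where $[C,M]=0$, the last term vanishes and the Hessian reduces to the expected bilinear form, which should match the signature computation carried out in the subsequent part of the paper.
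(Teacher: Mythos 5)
Your proposal is correct and takes essentially the same route as the paper's proof: the paper likewise computes $X\cdot Y\cdot J$ along a curve $C(t)=e^{t\Omega_x}Ce^{-t\Omega_x}$, expands $\dot M=\dot KRK+K\dot RK+KR\dot K$ with $V$ and $W$ arising exactly as the integral solutions of the differentiated Riccati and Lyapunov equations, and subtracts $\nabla_XY\cdot J$ using $\nabla_XY=\frac{1}{2}[C,[\Omega_x,\Omega_y]]$ from Lemma~\ref{lem:levicivitanorma}, with ad-invariance converting $\tr([[C,\Omega_x],M]\Omega_y)$ into the stated $\tr([C,\Omega_x][M,\Omega_y])$ term. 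The only refinement worth noting is that the paper works with \emph{constant} vector fields $X=\ad_C\Omega_x$, $Y=\ad_C\Omega_y$ (fixed $\Omega_x,\Omega_y$), so the ``terms from $\partial_X\Omega_y$'' you anticipate having to cancel simply never appear.
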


We prove Proposition~\ref{prop:compHessian} in the Appendix. The following Corollary makes the analysis of $d^2J$ tractable for $\gamma$ small.

\begin{Corollary}\label{cor:expHess}
Let $K_0$ and $R_0$ be the solutions of \begin{equation}
\label{eq:defKo}
A^\top K + KA +Q = 0
\end{equation} and \begin{equation}
\label{eq:defRo}
A^\top R +RA +L = 0\end{equation} respectively. Let \begin{equation}
\label{eq:defMo}M_0:=K_0R_0K_0.
\end{equation}  For $X=\ad_C\Omega_x$, $Y=\ad_C\Omega_y$, the Hessian of $J$ with respect to the normal metric has the following expansion around $\gamma=0$:

\begin{equation}
\label{eq:HessFexp}\begin{aligned}
d^2J(X,Y) \simeq \gamma\tr\left\{[C,\Omega_x][M_0,\Omega_y]-\frac{1}{2} [C,M_0][\Omega_x,\Omega_y]\right\}+ \gamma^2 T(\Omega_x,\Omega_y)
\end{aligned}
\end{equation}
where  the bilinear form $T$ contains terms of zeroth and higher orders in $\gamma$.
\end{Corollary}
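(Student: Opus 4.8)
The plan is to expand every $\gamma$-dependent quantity appearing in the Hessian formula~\eqref{eq:HessFcond} of Proposition~\ref{prop:compHessian} as a power series in $\gamma$ about $\gamma=0$, and then collect terms by order. Lemma~\ref{lem:limrK} guarantees that $K$ is analytic in $\gamma$, so I may write $K=K_0+O(\gamma)$, where $K_0$ solves~\eqref{eq:defKo}, the $\gamma=0$ limit of the Riccati equation. The Lyapunov equation for $R$ depends on $\gamma$ only through the coefficient $A-\gamma CK$, which is analytic in $\gamma$; since $A$ is stable, $A-\gamma CK$ remains stable for $\gamma$ small, so $R$ is also analytic with $R=R_0+O(\gamma)$, where $R_0$ solves~\eqref{eq:defRo}. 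Consequently $M=KRK=M_0+O(\gamma)$ with $M_0=K_0R_0K_0$ as in~\eqref{eq:defMo}.

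The key step is to read off the order of the auxiliary matrices $V$ and $W$. Both are defined with an explicit prefactor $\gamma$ multiplying an integral of the form $\int_0^\infty e^{(A-\gamma CK)t}(\cdots)e^{(A-\gamma CK)^\top t}\,dt$. Because $A-\gamma CK$ is stable for $\gamma$ small, these integrals converge and depend analytically on $\gamma$, hence stay bounded as $\gamma\to 0$. Therefore $V=O(\gamma)$ and, inspecting the integrand of $W$ (whose terms $CV$ and $VC$ are themselves $O(\gamma)$), also $W=O(\gamma)$. This single observation drives the whole estimate.

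With these orders in hand I substitute into the three groups of terms in~\eqref{eq:HessFcond}. The term $\gamma\tr\{[C,\Omega_x][M,\Omega_y]\}$ becomes $\gamma\tr\{[C,\Omega_x][M_0,\Omega_y]\}+O(\gamma^2)$, and likewise $-\tfrac{\gamma}{2}\tr\{[C,M][\Omega_x,\Omega_y]\}$ becomes $-\tfrac{\gamma}{2}\tr\{[C,M_0][\Omega_x,\Omega_y]\}+O(\gamma^2)$, using $M=M_0+O(\gamma)$. The middle term $\gamma\tr\{[C,VRK+KWK+KRV]\Omega_y\}$ has its bracket equal to $VRK+KWK+KRV=O(\gamma)$ since $V,W=O(\gamma)$, so its whole contribution is $\gamma\cdot O(\gamma)=O(\gamma^2)$. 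Collecting the two surviving first-order terms yields exactly the leading bracket in~\eqref{eq:HessFexp}, and lumping all remaining contributions into a single analytic remainder $\gamma^2T(\Omega_x,\Omega_y)$ (where $T=T_0+\gamma T_1+\cdots$ by analyticity) finishes the argument.

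The main obstacle is not any single computation but the bookkeeping needed to certify that these expansions are legitimate, i.e. that the integrals defining $V$, $W$ (and $R$) converge uniformly and depend analytically on $\gamma$ in a neighborhood of $0$; this rests entirely on the continuity of the spectrum of $A-\gamma CK$ together with the stability of $A$, which keeps every matrix exponential integrable. The conceptual payoff worth emphasising is that the genuinely awkward term---the one built from the double-integral quantities $V$ and $W$---is second order in $\gamma$ and therefore disappears from the leading-order Hessian, leaving an expression that depends on $C$ only through the fixed matrix $M_0$.
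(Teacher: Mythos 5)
Your proposal is correct and follows essentially the same route as the paper's proof: expand $K$, $R$, and hence $M=KRK$ analytically about $\gamma=0$ via Lemma~\ref{lem:limrK} and the linearity of the Lyapunov equation, observe that $V$ and $W$ carry an explicit factor of $\gamma$ (so the middle term of~\eqref{eq:HessFcond} is $O(\gamma^2)$), and substitute $M=M_0+O(\gamma)$ into the first and last terms to obtain the leading-order bracket. Your added justification that $A-\gamma CK$ stays stable for $\gamma$ small, keeping the defining integrals convergent and analytic, is a point the paper leaves implicit but is entirely consistent with its argument.
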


\begin{proof}

From Lemma~\ref{lem:limrK}, we know that for $\gamma$ small, the stabilizing solution $K$ of the Riccati equation can be expressed as $$K=K_0+\mbox{h.o.t. in }\gamma.$$ where h.o.t. are higher order terms in $\gamma$. Recall that $R$ obeys the equation $(A-\gamma CK)R-R(A-\gamma CK)^\top +L=0$. This is a linear equation and thus its solution, when it exists, depends analytically on $\gamma$ and $C$. Hence, similarly as for $K$, we can write for $\gamma$ small $$R=R_0+\mbox{h.o.t. in }\gamma.$$ We conclude from the above two expansions that we have \begin{equation}\label{eq:krkext}M \simeq M_0 + \mbox{h.o.t.}\mbox{ in }\gamma.\end{equation}
Now recall the explicit expression of $d^2J$ at $\gamma C$ derived in Proposition~\ref{prop:compHessian}. A simple calculation shows that the first and last terms of the right hand side of~\eqref{eq:HessFcond} admit the expansions $$\tr\left\{[\gamma C,\Omega_x][M,\Omega_y]\right\} = \gamma\tr\left\{[ C,\Omega_x][M_0,\Omega_y]\right\} +\mbox{h.o.t. in } \gamma$$ and $$-\tr\left\{\frac{1}{2} [\gamma C,M][\Omega_x,\Omega_y]\right\} =-\gamma \frac{1}{2}\tr \left\{ [C,M_0][\Omega_x,\Omega_y]\right\}+\mbox{h.o.t. in } \gamma $$ respectively. The second term however, since  both $V$ and $W$ have order one in $\gamma$, contributes terms of order at least two in $\gamma$. We thus have the expansion announced.
\end{proof}

We proved in Prop.~\ref{prop:finitenumequ} that $J$ had a finite number of extremal points $C_i(\gamma)$ for $\gamma$ small. The proof went by showing that the extremal points of $J(\gamma,C)$ were the same as the zeros  of the vector field $F(\gamma,C)=[C,M]$. The latter could however be easily be obtained at $\gamma=0$. We saw that they were of the form $$C_i(0) = \Theta^\top E_i \Theta$$ where $E_i$ is a diagonal matrix with $p$ entries equal to $1$, and the other entries zero, and $\Theta$ is the orthogonal matrix diagonalizing $M_0$~\eqref{eq:defMo}. The following Corollary allows us to evaluate the signatures of the extremal points $C_i(\gamma)$:

\begin{Corollary}\label{cor:signequivbilinear}
Let $M_0$ be as in Eq.~\eqref{eq:defMo} and $\Theta^\top D \Theta$ be its spectral decomposition.  The signature of $d^2J$ at the extremal point $C_i(\gamma)$, for $\gamma$ small, and $C_i(0) = \Theta^\top E \Theta$ where $E$ is a diagonal matrix with $p$ entries equal to $1$ and $n-p$ zero is the same as the signature of the bilinear form 
\begin{equation}\label{eq:HessDiagform}H: T_E \Sym(n,p)\times T_E \Sym(n,p) \longmapsto \R: (X,Y)\longmapsto \tr\left\{[E,\Omega_x][D,\Omega_y]\right\},\end{equation} where $X = \ad_E\Omega_X$, $Y=\ad_E\Omega_y$ and  provided that $H$ is non-degenerate.
\end{Corollary}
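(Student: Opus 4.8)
The plan is to reduce the signature of $d^2J$ at the extremal point to that of the explicit, $\gamma$-independent form $H$ by combining the expansion of Corollary~\ref{cor:expHess} with a conjugation and a continuity argument. First I would evaluate the leading term of~\eqref{eq:HessFexp} at $C=C_i(\gamma)$. Since the zeros of $F$ persist and vary smoothly for $\gamma$ small (Proposition~\ref{prop:finitenumequ}, via the implicit function theorem), one has $C_i(\gamma)=C_i(0)+O(\gamma)$ with $C_i(0)=\Theta^\top E\Theta$ satisfying $[C_i(0),M_0]=0$. Consequently the second term $-\frac12\tr\{[C_i(\gamma),M_0][\Omega_x,\Omega_y]\}$ is itself $O(\gamma)$, so it contributes only at order $\gamma^2$, while the first term yields $\gamma\,\tr\{[C_i(0),\Omega_x][M_0,\Omega_y]\}+O(\gamma^2)$. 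Thus the coefficient of $\gamma$ in $d^2J$, evaluated at $C_i(\gamma)$ in the limit $\gamma\to0$, is the bilinear form $B_0(\Omega_x,\Omega_y)=\tr\{[C_i(0),\Omega_x][M_0,\Omega_y]\}$.

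Next I would identify $B_0$ with $H$ by conjugation. Writing $C_i(0)=\Theta^\top E\Theta$ and $M_0=\Theta^\top D\Theta$ and using the $\Ad$-invariance~\eqref{eq:thetabracket} together with cyclicity of the trace, one gets $\tr\{[C_i(0),\Omega_x][M_0,\Omega_y]\}=\tr\{[E,\tilde\Omega_x][D,\tilde\Omega_y]\}$ with $\tilde\Omega=\Theta\Omega\Theta^\top$. The map $\Omega\mapsto\tilde\Omega=\Ad_{\Theta^{-1}}\Omega$ is a linear automorphism of $\fson$ carrying $\ker\ad_{C_i(0)}$ onto $\ker\ad_E$, and it is therefore a congruence between $B_0$ and $H$; in particular it preserves the signature. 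This reduces the claim to comparing the signature of $d^2J$ at $C_i(\gamma)$ with that of its $\gamma\to0$ leading form $B_0$.

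For the comparison I would trivialize the varying tangent spaces. Using the orthonormal frame of Lemma~\ref{lem:isoorthobasis} (transported along the smooth path $C_i(\gamma)$, or equivalently pulling $d^2J$ back to a fixed complement of $\ker\ad_C$ via $X=\ad_C\Omega$), I represent $d^2J|_{C_i(\gamma)}$ by a symmetric matrix $\mathcal H(\gamma)$ depending continuously on $\gamma$. By the above, $\mathcal H(\gamma)=\gamma\,\mathcal H_1(\gamma)$ with $\mathcal H_1(\gamma)\to\mathcal H_1(0)$, and $\mathcal H_1(0)$ is congruent to $H$. The main point is then the standard fact that the signature is locally constant among non-degenerate symmetric forms: if $H$ is non-degenerate the eigenvalues of $\mathcal H_1(0)$ are bounded away from zero, so by continuity of eigenvalues $\mathcal H_1(\gamma)$ has the same signature for $\gamma$ small, and multiplying by $\gamma>0$ leaves the signs of the eigenvalues unchanged. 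Hence $d^2J$ at $C_i(\gamma)$ and $H$ share the same signature.

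The step I expect to be most delicate is not any single computation but the bookkeeping around the moving tangent space $T_{C_i(\gamma)}\Sym(n,p)$: one must check that $\dim\ker\ad_{C_i(\gamma)}$ is constant — it is, being constant along $\Sym(n,p)$ — so that passing from the pulled-back form on $\fson$ to the genuine Hessian on the tangent space shifts only the count of forced zeros and not the signs of the nonzero eigenvalues. Once a continuous frame is fixed this becomes routine, and the non-degeneracy hypothesis on $H$ is exactly what rules out an eigenvalue crossing zero as $\gamma$ grows away from $0$.
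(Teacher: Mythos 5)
Your proof is correct and takes essentially the same route as the paper's: expand the Hessian via Corollary~\ref{cor:expHess}, discard the commutator term at the extremal point, transport the leading form to $T_E\Sym(n,p)$ via $\Ad_\Theta$ using~\eqref{eq:thetabracket} and cyclicity of the trace, and conclude by continuity of eigenvalues under the non-degeneracy hypothesis on $H$. The only cosmetic difference is that you estimate the second term as $O(\gamma^2)$ using $[C_i(0),M_0]=0$ and smooth dependence of $C_i(\gamma)$ on $\gamma$, whereas the paper notes that the corresponding term of the exact Hessian~\eqref{eq:HessFcond} vanishes identically at $C_i(\gamma)$ because $[C_i(\gamma),M]=0$ there.
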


\begin{proof}
Let $C= C_i(\gamma)$ and   $X_1, X_2 \in T_C\Sym(n,p)$ be such that $\ad_C \Omega_i =X_i$ for $i=1,2$ for $\Omega_1,\Omega_2 \in (\ker \ad_C)^\top$. Note that the second term in Eq.~\eqref{eq:HessFexp} came from the expansion of the last term in the Hessian of $J$~\eqref{eq:HessFcond}. For $C_i(\gamma)$  an extremal point, this latter term is zero and thus does not contribute to the approximation given Eq.~\eqref{eq:HessFexp}. Hence the dominating term in the Hessian of $J$ at $C_i(\gamma)$ for $\gamma>0$ small is the following bilinear form on $T_{C_i(\gamma)}\Sym(n,p)$: 
$$\gamma\tr\left\{[C_i(\gamma),\Omega_1][M_0,\Omega_2]\right\}.$$
Because $C_i(\gamma)$ depends continuously on $\gamma$, and because no eigenvalues of $H$ are zero by assumption, a standard argument using continuity of the eigenvalues with respect to $\gamma$ shows that the signatures of the extremal points $C_i(\gamma)$ for $\gamma$ small are the same as the one of $C_i(0)$. Hence  the signature of the above bilinear form is the same as the signature of $$\tr\left\{[C_i(0),\Omega_1][M_0,\Omega_2]\right\}.$$
We can simplify the problem further as follows: as was done in the first part of the proof , let $\Theta$ be the orthogonal matrix whose columns contains the eigenvectors of $M_0$.  The cyclic invariance of the trace,  Eq.~\eqref{eq:thetabracket} and the fact that $\Ad_\Theta$ is an isomorphism on $\fson$  together imply that the signature of $d^2J$ at extremal points is the same as the signature of the bilinear form 
\begin{equation}H: T_E \Sym(n,p)\times T_E \Sym(n,p) \longmapsto \R: (\Omega_1,\Omega_2)\longmapsto \tr\left\{[E,\Omega_1][D,\Omega_2]\right\}\end{equation} as was claimed. 
\end{proof}

It thus remains to evaluate the signature of the bilinear form of Eq.~\eqref{eq:HessDiagform}. We do this first for the case $p=1$.

\paragraph{The case of scalar observations.} We start with the case $p=1$, which corresponds to having a scalar observation signal. 
We prove the following Theorem, which covers item~\ref{it:uniqstat} of the main result.

\begin{Theorem}\label{th:equilanalysis}
Let $A$ be a stable matrix. For $\gamma >0$ small and generically for $L, Q$ positive definite matrices, the function $$J(\gamma,C): \R^+ \times \Sym(n,1) \longmapsto \R: J(\gamma,C)=\tr(LK)$$ where $K$ satisfies the Riccati equation~\eqref{eq:eqKR} has exactly $n$ extremal points with  signatures $(n-1,0,0)$, $(n-2,1,0)$,\ldots, $(0,n-1,0)$ respectively. Moreover, the extremal point of signature $(n-p,p-1,0)$ is the orthogonal projection matrix onto the $p$-th highest invariant subspace of $M$.
\end{Theorem}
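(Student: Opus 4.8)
The plan is to specialize to $p=1$ and feed the reductions already established into one explicit diagonalization. By Proposition~\ref{prop:finitenumequ}, for $\gamma>0$ small there are exactly ${n \choose 1}=n$ extremal points, and by Theorem~\ref{th:equilgradientflow} each is the orthogonal projection $C_k(\gamma)$ onto a one-dimensional eigenspace of $M$; at $\gamma=0$ these are $C_k(0)=\Theta^\top \Sigma_{kk}\Theta$, where $\Theta$ diagonalizes $M_0=K_0R_0K_0$ and $\Sigma_{kk}$ projects onto the $k$-th eigenvector. Thus the whole statement reduces to (i) computing the signature of the Hessian at each $C_k$, and (ii) matching the resulting index to the ordering of the eigenvalues of $M$.

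For step (i) I would invoke Corollary~\ref{cor:signequivbilinear}: the signature of $d^2J$ at $C_k(\gamma)$ equals that of the bilinear form $H(X,Y)=\tr\{[\Sigma_{kk},\Omega_x][D,\Omega_y]\}$ on $T_{\Sigma_{kk}}\Sym(n,1)$, where $D=\operatorname{diag}(d_1,\dots,d_n)$ carries the eigenvalues of $M_0$. The key computation is to evaluate $H$ in the orthonormal basis $\tfrac{1}{\sqrt2}\ad_{\Sigma_{kk}}\Omega_{kj}$, $j\neq k$, supplied by Lemma~\ref{lem:isoorthobasis}. Using Eq.~\eqref{eq:liebSO} one gets $[\Sigma_{kk},\Omega_{kj}]=\Sigma_{kj}$, while the commutator rule for a diagonal matrix gives $[D,\Omega_{kl}]=(d_k-d_l)\Sigma_{kl}$; since $\tr(\Sigma_{kj}\Sigma_{kl})=2\delta_{jl}$ for $j,l\neq k$, the form $H$ comes out diagonal with entries $2(d_k-d_j)$, $j\neq k$.

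The signature is then read off directly: the number of positive (resp. negative) eigenvalues of $H$ is the number of indices $j\neq k$ with $d_j<d_k$ (resp. $d_j>d_k$). Since the $d_j$ are distinct generically (Lemma~\ref{lem:genQL}), there are no zero eigenvalues, so $H$ is non-degenerate and the hypothesis of Corollary~\ref{cor:signequivbilinear} is met. Ordering $d_1>\cdots>d_n$ and letting $k$ be the index of the $p$-th largest eigenvalue, exactly $n-p$ of the $d_j$ lie below $d_k$ and $p-1$ lie above it, so the signature is $(n-p,p-1,0)$; this runs through $(n-1,0,0),(n-2,1,0),\dots,(0,n-1,0)$ as $p=1,\dots,n$, the positive-definite case ($p=1$) being a minimum and the negative-definite case ($p=n$) a maximum.

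Finally, for step (ii) I would pass from $M_0$ back to $M$ by continuity: from the expansion $M\simeq M_0+\text{h.o.t.}$ in $\gamma$ (Corollary~\ref{cor:expHess}) and the fact that $M_0$ has distinct eigenvalues, for $\gamma$ small $M$ likewise has distinct eigenvalues whose ordering agrees with that of $M_0$, so the extremal point of signature $(n-p,p-1,0)$ is the orthogonal projection onto the $p$-th highest eigenspace of $M$, as claimed. The main obstacle is not conceptual but lies in the explicit evaluation of $H$: getting the basis, the two commutator identities, and the trace normalization exactly right so that $H$ is diagonal with the correct signs. Once that is in hand, both the signature count and the continuity argument are routine.
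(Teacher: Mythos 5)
Your proposal is correct and follows essentially the same route as the paper: the count from Proposition~\ref{prop:finitenumequ}, the reduction via Corollary~\ref{cor:signequivbilinear} to the model form $H(X,Y)=\tr\{[E,\Omega_x][D,\Omega_y]\}$, diagonalization in the orthonormal basis $\tfrac{1}{\sqrt{2}}\ad_{E}\Omega_{kj}$ from Lemma~\ref{lem:isoorthobasis}, and the sign count $(n-p,p-1,0)$ under the decreasing ordering of the $d_j$. Your only addition is the explicit continuity step identifying the eigenspaces of $M$ with those of $M_0$ for small $\gamma$, which the paper leaves implicit (and an immaterial factor of $2$ in the diagonal entries of $H$, which does not affect the signature).
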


\begin{proof}From Prop.~\ref{prop:finitenumequ}, we know that for $\gamma$ small, the function $J$ has exactly $n$ extremal points. We now evaluate the signature of the Hessian at these points.
Let $E_j$ be the matrix with zero entries except for the $jj$th entry, which is one. From Corollary~\ref{cor:signequivbilinear}, it suffices to  to evaluate the signatures of the $n$ bilinear forms obtained by letting  $E=E_j$, for $j=1,\ldots,n$, in $H$ in Eq.~\eqref{eq:HessDiagform} and if these are non-degenerate, they will give us the signatures sought. Assume that the diagonal entries of $D$ are sorted in decreasing order. With this ordering, the signature of $H_{E_j}$ is the same as the signature of $d^2J$ at the extremal point $C_j(\gamma)$, where $C_j(0)=c_jc_j^\top$ and $c_j$ is the eigenvector associated to the $j$th largest eigenvalue of  $M_0$. Recall that from Lemma~\ref{lem:isoorthobasis}, an orthonormal basis of the tangent space of $\Sym(n,1)$ at $E_1$ is given by the commutators or $E_1$ and the $n-1$ matrices $\frac{1}{\sqrt{2}}\Omega_{12}, \frac{1}{\sqrt{2}}\Omega_{13}, \ldots, \frac{1}{\sqrt{2}}\Omega_{1n}$. Note that since $D$ is diagonal, $[D,\Omega_{1j}]=\Omega_{1j}(d_1-d_j)$ and thus

$$H_1(\Omega_{1j},\Omega_{1l}) = (d_1-d_j) \delta_{jl}$$ where $\delta_{jl}=1$ if $j=l$ and zero otherwise. This basis hence diagonalizes $H_1$ and shows that its eigenvalues are $(d_1-d_j)$ and are all positive. Thus the signature at $C_1$ is $(n,0,0)$. Now for the general case of $H_j = \tr\{[E_j,\Omega_1][D,\Omega_2]\}$. An orthonormal basis of the tangent space at $E_j$ is given by the commutators of $E_j$ and $\frac{1}{\sqrt{2}}\Omega_{jl}$ for $j \in \{1,2,\hat j,\ldots, n\}$ where  $\hat j$ indicates that $j$ is ommited from the set. Applying the same approach, we find that the eigenvalues of $H_j$ are $(d_j-d_l)$, for $l \in   \{1,2,\hat j,\ldots, n\}$.  Hence $n-j$ eigenvalues are positive and $j-1$ are negative. Thus the signature of $E_j$ is $(n-j,j-1,0)$. This concludes the proof.
\end{proof}

\paragraph{The case of vector-valued observations.}
We now address  the case $p >1$.  Recall that if $c_1$ and $c_2$ are   $p \times n$ matrices of orthonormal rows  that span the same $p$-dimensional subspace of $\R^n$ then, all other things equal, the estimation error of the corresponding Kalman filters have the same statistical properties.
Consequently, optimization problems involving the statistical properties of the estimation error will, when restricted to orthonormal observation vectors,  have loci of extremal values  and all extremal values in the same locus will yield the same estimation performance. 

We now present some combinatorial facts needed to state the main result of this section. Let $m>0$ be an integer. Recall that a {\bf partition} of $m$ with $p$ parts is given by $p$ positive integers $m_1,\ldots,m_p$ whose sum is $m$. The partition is said to have \emph{distinct parts} or to be a {\bf distinct partition} if the integers $m_i$ are pairwise distinct. We denote by $P(p,m)$ the number of partitions of $m$ into $p$ parts and  by $Q(p,m)$ the number of distinct partitions of $m$ into $p$ parts.  One can show that
$$Q(p,m)= P(m-{p \choose 2},p).$$ See~\cite{wilf_generating89}  for more properties of $P$ and methods to compute it.

Let $d=np-p^2$ denote the dimension of $\Sym(n,p)$. We have the following result:
\begin{Theorem}\label{th:equilpart2}
With the same assumptions as in  Theorem~\ref{th:equilanalysis}, the function $$J: \R^+ \times \Sym(n,p) \longmapsto \R: (\gamma,C) \longmapsto \tr(LK)$$ has ${n \choose p}$ equilibria. For any pair $(n_+,n_-)$ of positive integers such that $n_++n_-=d$, there are $Q(p, n_++\frac{p(p+1)}{2})$ extremal points with index $(n_+,n_-,0)$. In particular,  there are unique extremal points with signatures $(d,0,0)$, $(d-1,1,0)$, $(1,d-1,0)$ and $(0,d,0)$ respectively and no degenerate extremal points.
\end{Theorem}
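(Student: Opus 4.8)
The plan is to reduce each signature to a purely combinatorial count of a $p$-element index set, mirroring the scalar argument of Theorem~\ref{th:equilanalysis}. The count of ${n \choose p}$ equilibria is already furnished by Proposition~\ref{prop:finitenumequ}, and the identification of extremal points with orthogonal projections onto $p$-dimensional invariant subspaces of $M$ by Theorem~\ref{th:equilgradientflow}; so the content of the proof is the evaluation of the signatures. By Corollary~\ref{cor:signequivbilinear}, the signature of $d^2J$ at the extremal point $C_i(\gamma)$ with $C_i(0)=\Theta^\top E\Theta$ coincides, for $\gamma$ small, with that of the model form $H(X,Y)=\tr\{[E,\Omega_x][D,\Omega_y]\}$ on $T_E\Sym(n,p)$, where $D=\operatorname{diag}(d_1,\ldots,d_n)$ collects the eigenvalues of $M_0$ and $E=\sum_{i\in\mathcal{M}}\Sigma_{ii}$ encodes the $p$-element set $\mathcal{M}$ selecting the chosen eigenvectors. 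I would fix the ordering $d_1>\cdots>d_n$, legitimate because generic $Q,L$ render these eigenvalues distinct by Lemma~\ref{lem:genQL}.

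Next I would diagonalize $H$ in the orthonormal basis $\{\frac{1}{\sqrt2}\ad_E\Omega_{ij}:i\in\mathcal{M},\,j\in\overline{\mathcal{M}}\}$ supplied by Lemma~\ref{lem:isoorthobasis}. The two bracket identities needed follow at once from Eq.~\eqref{eq:liebSO}: for $i\in\mathcal{M}$, $j\in\overline{\mathcal{M}}$ one has $[E,\Omega_{ij}]=\Sigma_{ij}$, while $[D,\Omega_{kl}]=(d_k-d_l)\Sigma_{kl}$ for all $k,l$. Since $\tr(\Sigma_{ij}\Sigma_{kl})=2\delta_{ik}\delta_{jl}$ for indices drawn from $\mathcal{M}\times\overline{\mathcal{M}}$, the form $H$ is diagonal in this basis with eigenvalues proportional to $(d_i-d_j)$. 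As the $d_i$ are distinct, no eigenvalue vanishes, so $H$ is non-degenerate, $n_0=0$, and the hypothesis of Corollary~\ref{cor:signequivbilinear} holds; this settles the assertion that there are no degenerate extremal points. The number of positive eigenvalues is then $n_+=\#\{(i,j)\in\mathcal{M}\times\overline{\mathcal{M}}:i<j\}$ and $n_-=d-n_+$.

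The remaining, and most delicate, step is the combinatorial accounting, which I expect to be the main obstacle. Writing $\mathcal{M}=\{m_1<\cdots<m_p\}$ and counting non-inversions gives $n_+=d+\frac{p(p+1)}{2}-\sum_a m_a$, equivalently $\sum_a m_a=n_-+\frac{p(p+1)}{2}$. Since $\mathcal{M}$ is exactly a choice of $p$ distinct integers in $\{1,\ldots,n\}$, the number of extremal points with prescribed $n_-$ equals the number of partitions of $n_-+\frac{p(p+1)}{2}$ into $p$ distinct parts, each at most $n$; when this argument is small the bound $m_p\le n$ is inactive and the count reduces to $Q(p,n_-+\frac{p(p+1)}{2})$. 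I would then invoke the reflection $m_a\mapsto n+1-m_a$, a bijection on $p$-subsets that interchanges $n_+$ and $n_-$, both to rewrite the count in the stated form $Q(p,n_++\frac{p(p+1)}{2})$ and to exhibit the symmetry pairing signatures $(n_+,n_-,0)$ with $(n_-,n_+,0)$. The four distinguished cases follow directly: $n_-\in\{0,1\}$ forces $\mathcal{M}=\{1,\ldots,p\}$ or $\{1,\ldots,p-1,p+1\}$, yielding the unique $(d,0,0)$ and $(d-1,1,0)$ points, and their reflections give the unique $(1,d-1,0)$ and $(0,d,0)$ points. The genuine care lies in reconciling this honest, box-constrained count with the unrestricted partition function $Q$ and tracking the largest-part bound $m_p\le n$; by contrast the linear-algebraic reduction above is a routine extension of the $p=1$ computation.
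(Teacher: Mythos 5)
Your proposal retraces the paper's own proof almost step for step: the $\binom{n}{p}$ equilibria come from Proposition~\ref{prop:finitenumequ}, the reduction of $d^2J$ at $C_i(\gamma)$ to the model form $H_E(X,Y)=\tr\left\{[E,\Omega_x][D,\Omega_y]\right\}$ is Corollary~\ref{cor:signequivbilinear}, and your diagonalization of $H_E$ in the orthonormal basis $\frac{1}{\sqrt{2}}\ad_E\Omega_{ij}$, $i\in\mathcal{M}$, $j\in\overline{\mathcal{M}}$, with eigenvalues proportional to $d_i-d_j$, is exactly the content of Lemma~\ref{lem:calcsignHessgen}, which the paper proves in the appendix and you re-derive inline. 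Two divergences are worth recording. First, your sign bookkeeping is the reverse of the paper's: you count non-inversions, getting $\sum_a m_a=n_-+\frac{p(p+1)}{2}$, whereas Lemma~\ref{lem:calcsignHessgen} asserts $n_+=m-\frac{p(p+1)}{2}$. With $D$ in decreasing order ($d_i-d_j>0$ iff $i<j$), your convention is the internally consistent one: it reproduces the signatures $(n-j,j-1,0)$ of Theorem~\ref{th:equilanalysis} and makes $\mathcal{M}=\{1,\ldots,p\}$ (the highest invariant subspace, the claimed minimum) the point with positive-definite Hessian. Your reflection bijection $m_a\mapsto n+1-m_a$, absent from the paper, shows the count is invariant under exchanging $n_+$ and $n_-$, so this swap is harmless for the theorem's formula.

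Second, the caution you flag at the combinatorial step is not excess scruple but a correction the paper itself needs. The honest count is of $p$-subsets of $\{1,\ldots,n\}$ with prescribed sum, i.e.\ distinct partitions with largest part at most $n$; the paper silently identifies this with the unrestricted count $Q(p,n_++\frac{p(p+1)}{2})$, and the identification is valid precisely when every relevant partition respects the bound, i.e.\ when $n_+\le n-p$ (for your mirrored count, $n_-\le n-p$). Outside that range the theorem's general formula fails: for $n=4$, $p=2$ (so $d=4$), there is exactly one extremal point with signature $(4,0,0)$ and one with $(3,1,0)$, while the formula returns $Q(2,7)=3$ and $Q(2,6)=2$; summing the formula over all signatures gives $9$, exceeding the total of $\binom{4}{2}=6$ equilibria. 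The four distinguished uniqueness claims survive, exactly by your argument: the extreme partitions $\{1,\ldots,p\}$ and $\{1,\ldots,p-1,p+1\}$, and their reflections, always fit within the bound when $p<n$. To close your proof (and repair the statement), replace $Q(p,n_++\frac{p(p+1)}{2})$ by the number of partitions of $n_+$ into at most $p$ parts, each at most $n-p$ (the coefficient of $q^{n_+}$ in the Gaussian binomial coefficient); this agrees with $Q$ in the regime you identify and manifestly carries the $(n_+,n_-)\leftrightarrow(n_-,n_+)$ symmetry your reflection exhibits.
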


The proof of Theorem~\ref{th:equilpart2} relies on the following Lemma.
\begin{restatable}{Lemma}{lemmasiggeb}\label{lem:calcsignHessgen}
 Let $E$ be a diagonal matrix with diagonal entries 1 and zero and  let $X = \ad_E\Omega_X$, $Y=\ad_E\Omega_y$ be in $T_E\Sym(n,p)$. Define the bilinear form $$H_E: T_E \Sym(n,p) \times T_E \Sym(n,p) \longmapsto \R: (X,Y) \longmapsto  \tr\left\{[E,\Omega_x][D,\Omega_y]\right\}$$ where $D$ is a diagonal matrix with pairwise distinct entries in decreasing order along the diagonal. Let $m_i, i=1,\ldots,p$ denote the positions of the ones on the diagonal of $E$, i.e. $E=\sum_{i=1}^p \Sigma_{m_i,m_i}$ and $m=\sum_{i=1}^p m_i$.  Then the signature of $H_E$ is $(m-\frac{p(p+1)}{2},np-p(p-1)/2-m,0)$. 
\end{restatable}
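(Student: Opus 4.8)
The plan is to evaluate $H_E$ in the orthonormal basis of $T_E\Sym(n,p)$ furnished by Lemma~\ref{lem:isoorthobasis}, observe that this basis already diagonalizes $H_E$, and then reduce the signature to a count of signs among the eigenvalue gaps of $D$. Writing $\mathcal{M}=\{m_1<\cdots<m_p\}$ for the positions of the ones of $E$, the basis is $\frac{1}{\sqrt{2}}\ad_E\Omega_{m_s,j}$ with $m_s\in\mathcal{M}$ and $j\in\overline{\mathcal{M}}$, of which there are exactly $p(n-p)=\dim\Sym(n,p)$, matching the dimension.

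The Gram matrix is computed directly. The bracket identity~\eqref{eq:liebSO} gives $[E,\Omega_{m_s,j}]=\Sigma_{m_s,j}$, and the rule $([D,A])_{ij}=a_{ij}(d_i-d_j)$ for $A=(a_{ij})$ gives $[D,\Omega_{kl}]=(d_k-d_l)\Sigma_{kl}$. Combining these with $\tr(\Sigma_{ab}\Sigma_{cd})=2(\delta_{ac}\delta_{bd}+\delta_{ad}\delta_{bc})$ and using that $\mathcal{M}$ and $\overline{\mathcal{M}}$ are disjoint (so every cross term vanishes), I obtain $H_E(\ad_E\Omega_{m_s,j},\ad_E\Omega_{m_t,l})=2(d_{m_s}-d_j)\,\delta_{st}\delta_{jl}$. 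Hence $H_E$ is diagonal in this basis, with eigenvalues $2(d_{m_s}-d_j)$. Since the entries of $D$ are pairwise distinct, no eigenvalue is zero; thus $H_E$ is non-degenerate, $n_0=0$, and the non-degeneracy hypothesis invoked in Corollary~\ref{cor:signequivbilinear} holds.

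It remains to count signs, which is the combinatorial core. With $D$ ordered decreasingly, $d_{m_s}-d_j$ has the sign of $j-m_s$, so the direction $(m_s,j)$ is negative exactly when $j<m_s$. For fixed $s$ the number of $j\in\overline{\mathcal{M}}$ below $m_s$ is $(m_s-1)-(s-1)=m_s-s$, since precisely $m_1,\dots,m_{s-1}$ are the elements of $\mathcal{M}$ below $m_s$. Summing gives $\sum_{s=1}^{p}(m_s-s)=m-\frac{p(p+1)}{2}$ negative eigenvalues and, complementarily, $p(n-p)-\big(m-\frac{p(p+1)}{2}\big)=np-\frac{p(p-1)}{2}-m$ positive ones.

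The only real obstacle is the sign bookkeeping: the decreasing-order convention on $D$ is what assigns these two counts to $n_-$ and $n_+$, and a single slip transposes them. Carried out as above, the tally yields $n_+=np-\frac{p(p-1)}{2}-m$ and $n_-=m-\frac{p(p+1)}{2}$, i.e. the signature $\big(np-\frac{p(p-1)}{2}-m,\,m-\frac{p(p+1)}{2},\,0\big)$; this is consistent with the $p=1$ count $(n-j,j-1,0)$ of Theorem~\ref{th:equilanalysis} and with the minimum at $\mathcal{M}=\{1,\dots,p\}$ having signature $(np-p^2,0,0)$. Matching the order in which the triple is written in the lemma statement is then only a matter of fixing this convention, and is the one place where care is genuinely needed; the two computational steps are mechanical consequences of Lemma~\ref{lem:isoorthobasis} and the bracket formulas.
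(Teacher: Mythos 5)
Your proposal follows the paper's own route step for step: the orthonormal basis $\frac{1}{\sqrt{2}}\ad_E\Omega_{m_s,j}$, $m_s\in\mathcal{M}$, $j\in\overline{\mathcal{M}}$, of Lemma~\ref{lem:isoorthobasis}; the observation that this basis already diagonalizes $H_E$, with diagonal entries $2(d_{m_s}-d_j)$ (the paper computes the same Gram matrix, $H_E(\Omega_{ij},\Omega_{kl})=2(d_i-d_j)\delta_{ik}\delta_{jl}$); and the same enumeration of pairs in $\mathcal{M}\times\overline{\mathcal{M}}$ via the count $m_s-s$, producing the two totals $m-\frac{p(p+1)}{2}$ and $np-\frac{p(p-1)}{2}-m$. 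There is no methodological difference and no gap in your computation.

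Where you diverge from the printed statement --- the assignment of the two counts to $n_+$ and $n_-$ --- you are right and the lemma as stated is transposed. The paper's proof asserts that, with $D$ in decreasing order, $d_i-d_j>0$ if and only if $i>j$; the inequality in fact goes the other way, and that single slip swaps the two entries of the triple. Your signature $\bigl(np-\frac{p(p-1)}{2}-m,\ m-\frac{p(p+1)}{2},\ 0\bigr)$ is the one consistent with the rest of the paper: for $p=1$ and $\mathcal{M}=\{j\}$ it gives $(n-j,j-1,0)$, matching the explicit eigenvalue computation in Theorem~\ref{th:equilanalysis}, and it makes the Hessian positive definite precisely at $\mathcal{M}=\{1,\ldots,p\}$, i.e.\ at the highest $p$-dimensional invariant subspace of $M_0$, which must be the minimum by item~\ref{it:uniqvect} of the main results --- whereas the printed triple would make that point a maximum. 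The transposition also leaks into the proof of Theorem~\ref{th:equilpart2}, which identifies $m_1=1,\ldots,m_p=p$ as the unique negative-definite extremal point; under the corrected signs it is the unique positive-definite one. The counting statement there nevertheless survives, since the complementation $m_i\mapsto n+1-m_{p+1-i}$ is a bijection on index sets exchanging $n_+$ and $n_-$. In short: your proof is the paper's proof with the sign bookkeeping done correctly, and the one step you flagged as delicate is exactly where the paper errs.
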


The proof of Lemma~\ref{lem:calcsignHessgen} is in the appendix.
Note that the signature of the Hessian is independent of the exact values of the entries of $D$, provided they are pairwise distinct and sorted in decreasing order. We first illustrate  Lemma~\ref{lem:calcsignHessgen} on an example. Set $p=4$ and $n=7$ and take $E$ to be the diagonal matrix $$E=\begin{pmatrix}
1 & 0 & 0 & 0 & 0 & 0 & 0\\
0 & 0 & 0 & 0 & 0 & 0 & 0\\
0 & 0 & 1 & 0 & 0 & 0 & 0\\
0 & 0 & 0 & 1 & 0 & 0 & 0\\
0 & 0 & 0 & 0 & 0 & 0 & 0\\
0 & 0 & 0 & 0 & 0 & 1 & 0\\
0 & 0 & 0 & 0 & 0 & 0 & 0
\end{pmatrix}.
$$ For this particular $E$, $m_1=1,m_2=3,m_3=4, m_4=6$ and thus $m=14$. Hence the Lemma says that the bilinear form $H_E$ has a mixed signature $(4,8,0)$. 

The proof of this Theorem is the same as the proof of Theorem~\ref{th:equilanalysis}, save for the evaluation of the signature of the Hessian. We start by summarizing the major steps  leading to where the two proofs differ. For $\gamma>0$ small, there is a one-to-one correspondence between extremal points of $J(\gamma,\cdot)$ and zeros  $[C,M_0]$ where $C=c^\top c, M_0=K_0R_0K_0$ and $K_0, R_0$ are defined in Eqs.~\eqref{eq:defKo} and~\eqref{eq:defRo}.  Because both $C$ and $M_0$ are symmetric, there are ${n \choose p}$ such zeros, corresponding to choices of $p$ eigenvectors of $M_0$. Finally, we have shown that we can assume, without loss of generality, that $M_0$ is a diagonal matrix and that the Hessian at the extremal points has the same signature as the bilinear form $H_E$ of Eq.~\eqref{eq:HessDiagform}.
\begin{proof}[Proof of Theorem~\ref{th:equilpart2}]

An extremal point of $J$ can thus be characterized by $p$ distinct, positive integers $m_1,\ldots,m_p$, indicating the position of the $p$ entries on the diagonal of $E$ that are equal to $1$, the other entries being equal to 0. From Lemma~\ref{lem:calcsignHessgen}, we know that the signature of $H_E$ is $(m-\frac{p(p+1)}{2},np -m-\frac{p(p-1)}{2},0)$ where $m = m_1+m_2+\ldots+m_p$. From the definition of $Q(p,m)$, we see that the number of extremal points with $n_+$ positive eigenvalues is $Q(p,n_++\frac{p(p+1)}{2})$ as announced.

In particular, the number of extremal points whose Hessian is negative definite (i.e. $n_+=0$) is equal to the number of partitions of $\frac{p(p-1)}{2}$ by $p$ distinct positive integers. There is clearly only one such partition, given by $m_1=1,\ldots, m_p=p$. Similarly, $m_1=1,\ldots m_{p-1}=p-1, m_p = p+1$ is the only partition of $\frac{p(p-1)}{2}+1$ with distinct positive integers. Hence there is also a unique extremal point with $n_+=1$. One can show in the same fashion that there are unique extremal points with $n_-=0$ and $n_-=1$.
\end{proof}

As a corollary of Theorem~\ref{th:equilanalysis} and~\ref{th:equilpart2}, we can show that except for a set of measure zero of initial conditions, the differential equation described in item~\ref{it:itemdiffeq} converges to an optimal observation matrix. Precisely, we have the following result:

\begin{Corollary}\label{cor:globconv}
Let $A$ be a stable matrix and $1 \leq p \leq n$. Let $J:\R^+\times \Sym(n,p) \longmapsto \R: J(\gamma,C)=\tr(LK)$ where $L$ is positive definite and $K$ is the positive definite solution of the Riccati equation
$$A^\top K+KA -KCK+Q=0.$$ Let $R$ be the solution of $$(A-CK)R+R(A-CK)^\top +L=0.$$ For $\gamma>0$ small and generically for $Q,L$ positive definite  the differential equation $$\dot C = [C,[C,M]]$$ with $M=KRK$ converges to a global minimum of $J(\gamma,c)$ from a set of measure one of initial conditions.
\end{Corollary}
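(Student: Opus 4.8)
The plan is to read $\dot C=[C,[C,M]]$ as the gradient flow of $J$ produced in Theorem~\ref{th:equilgradientflow} and then run the standard Morse-theoretic/stable-manifold argument on the compact manifold $\Sym(n,p)$. Since Theorem~\ref{th:equilgradientflow} gives $\nabla J=\gamma[C,[C,M]]$, the equation in the statement differs from the gradient flow only by the positive constant factor $\gamma$; this merely reparametrizes time and therefore leaves the trajectories, their equilibria, and their $\omega$-limit sets unchanged. Along this flow $J$ is monotone and strictly decreasing off equilibria, and by Lemma~\ref{lem:limrK} the map $C\mapsto\tr(LK)$ is analytic, hence bounded, on the compact $\Sym(n,p)$. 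We are thus dealing with a well-behaved gradient flow on a closed manifold.

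First I would show that every trajectory converges to one of the finitely many equilibria. On a compact manifold the $\omega$-limit set of a gradient trajectory is nonempty, connected and flow-invariant; $J$ is constant on it, and because $\int_0^\infty\|\nabla J\|^2\,dt<\infty$ the limit set consists entirely of critical points of $J$. Proposition~\ref{prop:finitenumequ}, together with Theorems~\ref{th:equilanalysis} and~\ref{th:equilpart2}, guarantees that for $\gamma$ small and generic positive definite $Q,L$ the critical set is exactly the finite family $\{C_i(\gamma)\}$ of ${n \choose p}$ nondegenerate points. A connected subset of a discrete set is a single point, so each $\omega$-limit set is a singleton and every trajectory converges to one $C_i(\gamma)$. (The analyticity from Lemma~\ref{lem:limrK} would give the same conclusion through the {\L}ojasiewicz gradient inequality, but nondegeneracy already suffices.)

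Next I would invoke the signature computations. Theorems~\ref{th:equilanalysis} and~\ref{th:equilpart2} exhibit a unique equilibrium whose Hessian is positive definite, of signature $(d,0,0)$ with $d=np-p^2=\dim\Sym(n,p)$; being the only local minimizer on the compact manifold, it is the global minimizer of $J$. Every other equilibrium satisfies $n_-\ge 1$, so it has at least one direction of strict decrease of $J$, and all equilibria are nondegenerate and hence hyperbolic for the flow. Linearizing the descent flow at such a point gives $-\,d^2J$, whose stable subspace has dimension $n_+=d-n_-\le d-1$. By the stable manifold theorem the global stable set $W^s(C_i(\gamma))$ of each non-minimizing equilibrium is an injectively immersed submanifold of dimension $n_+<d$, hence a Lebesgue-null subset of $\Sym(n,p)$.

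Finally I would assemble the pieces: any initial condition whose trajectory fails to converge to the global minimum must lie in the finite union $\bigcup_i W^s(C_i(\gamma))$ over the non-minimizing equilibria, which is a null set for the Riemannian volume of the normal metric. Its complement has full measure, and by the first two steps every trajectory starting there converges to the unique global minimizer, as claimed. The step I expect to be most delicate is precisely the passage from the local stable manifold theorem to the assertion that each global stable set is genuinely measure zero: one has to realize the global stable set as a countable (here finite) union of backward-flow images of the local stable manifold, each an immersed submanifold of dimension strictly below $d$, and conclude that the whole set is Lebesgue-null---this, rather than the already-completed signature bookkeeping, is where the care is required.
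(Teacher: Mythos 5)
Your proposal is correct and takes exactly the route the paper intends: the paper states Corollary~\ref{cor:globconv} with no written proof, presenting it as an immediate consequence of Proposition~\ref{prop:finitenumequ} and Theorems~\ref{th:equilanalysis} and~\ref{th:equilpart2}, and your Morse-theoretic argument (convergence of every trajectory to one of the finitely many nondegenerate equilibria, uniqueness of the signature-$(d,0,0)$ local minimizer, and measure-zero global stable sets of the hyperbolic saddles via backward-flow images of local stable manifolds) is precisely the standard filling-in of that sketch. One remark: taken literally, Theorem~\ref{th:equilgradientflow}'s formula $\nabla J = \gamma[C,[C,M]]$ would make $\dot C = [C,[C,M]]$ an \emph{ascent} flow, contradicting your opening claim that $J$ decreases along it; a direct computation gives $dJ \cdot X = -\gamma\tr(MX)$, hence $\nabla J = -\gamma[C,[C,M]]$ (a sign slip in the paper's ad-invariance step), so your later descent convention (linearization $-d^2J$, stable subspace of dimension $n_+$) is the correct one and your conclusion stands.
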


\section{Discussion}\label{sec:disc}
We posed and solved the problem of finding the sensor minimizing the estimation error afforded by the Kalman filter.  The methodology proposed is applicable to actuator design as well.
The optimal sensor design problem is a difficult problem in the sense that it is not convex.  We  cast the problem as an optimization problem on an isospectral manifold  and  equipped  this space with a Riemannian metric, called the normal metric. We then  evaluated the gradient and Hessian of the cost function $J$ to be optimized. We have shown that for $\gamma$ small, where $\gamma$ is the norm of the observation vector or sensor, and a stable infinitesimal generator $A$ of the dynamics, the gradient flow  converges with probability one to the global minimum. We have restricted the analysis in this paper to the case of orthonormal sensing matrices. Similar results hold for the general case. They are  technically more involved and we do not elaborate on these here due to space constraints and the fact that  most of the main ideas already appear in the present treatment of the orthonormal case.

We now discuss the role of the assumptions made. The first statement of the main result, which characterizes optimal observation matrices, holds free of the assumptions that $\gamma$ be small and the infinitesimal generator $A$ be stable. The second, third and fourth statements, however, relied on these assumptions. From a practitioner's point of view, how small does $\gamma$ need to be? We can answer this question using Eq.~\eqref{eq:HessFcond} and the proof of Theorem~\ref{th:equilanalysis}: the assumption of $\gamma$ small holds for $\gamma< \gamma^*$ where $\gamma^*$ is the smallest $\gamma$ such that the bilinear form
$$\tr\left\{[ C,\Omega_1][M,\Omega_2]+[ C,VRK+KWK+KRV]\Omega_2\right\}$$ with  $C$ extremal has a zero eigenvalue. Indeed, for $0 \leq \gamma <\gamma^*$, we then know that the above bilinear form has no zero eigenvalues and its signature is the one of the lowest order term. Note that $\gamma^*$ depends on $A$ and $Q$. We show in Fig. 1 simulation results, which show that this assumptions holds for rather large $\gamma$ in general. The curves are obtained as follows. We first set $Q = \frac{1}{2} I_4$. We then sampled four batches of $10^4$  real $4 \times 4$ matrices which are stable and whose eigenvalues with largest real parts were, depending on the batch, $-0.1$, $-0.5$, $-1$, or $-3$ (denoted by $\operatorname{Re} \lambda_{m}$ in the legend.). We obtained the samples by drawing matrices from a Gaussian ensemble and then translated their eigenvalues by adding a multiple of the identity matrix. For each sample matrix, and for $\gamma$ ranging from $10^{-3}$ to $10$ we searched for the zeros of the gradient of $J$ numerically and then checked  whether the Hessian at that zero had a signature given by the dominating term. The curves represent the proportion  of matrices, out of the $10^4$ samples, for which $\gamma< \gamma^*$. For example, about $80\%$ of the matrices with $\operatorname{Re}\lambda_m = -\frac{1}{2}$ were such that $\gamma=4$ qualifies as small. Unsurprisingly, as the eigenvalues of $A$ are further away from the imaginary axis, $\gamma^*$ increases and the proportion of matrices for which $\gamma<\gamma^*$, for $\gamma$ fixed increases as well. Indeed, for $\operatorname{Re}\lambda_m=-3$, close to $100\%$ of matrices are such that $\gamma=4$ qualifies as small.
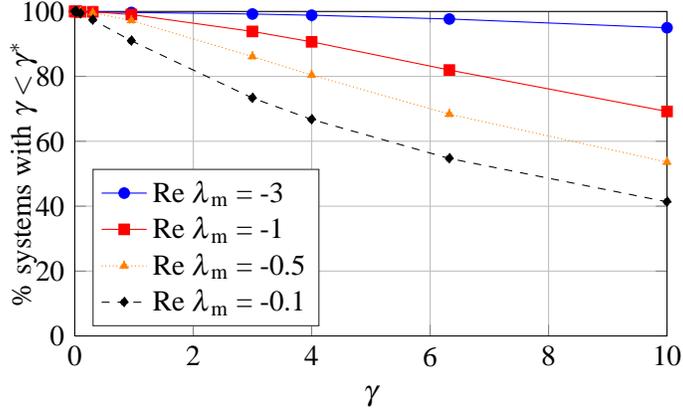
\begin{figure}
\begin{center}
\begin{tikzpicture}
\begin{axis}[%
width=3.1in,
height=1.7in,
scale only axis,
xmin=0,
xmax=10,
xlabel={$\gamma$},
xmajorgrids,
ymin=0,
ymax=100,
ylabel={\% systems with $\gamma < \gamma^*$},
y label style={at={(0.07,0.5)}},
ymajorgrids, 
yminorgrids,
legend style={at={(0.03,0.03)},anchor=south west,legend cell align=left,align=left,draw=white!15!black}
]
\addplot [color=blue,mark=*]
  table[row sep=crcr]{%
0.001	100\\
0.0031	100\\
0.0099	100\\
0.0309	99.87\\
0.097	99.91\\
0.3045	99.85\\
0.9558	99.69\\
3	99.21\\
4	98.8375\\
6.3246	97.675\\
10	94.975\\
};
\addlegendentry{$\text{Re }\lambda{}_\text{m}\text{ = -3}$};

\addplot [color=red,mark=square*]
  table[row sep=crcr]{%
0.001	99.99\\
0.0031	100\\
0.0099	99.99\\
0.0309	99.94\\
0.097	99.9\\
0.3045	99.89\\
0.9558	99.04\\
3	93.84\\
4	90.6375\\
6.3246	81.95\\
10	69.1875\\
};
\addlegendentry{$\text{Re }\lambda{}_\text{m}\text{ = -1}$};

\addplot [color=orange,densely dotted,mark=triangle*]
  table[row sep=crcr]{%
0.001	99.99\\
0.0031	99.98\\
0.0099	99.97\\
0.0309	99.97\\
0.097	99.88\\
0.3045	99.6\\
0.9558	97.2\\
3	86.11\\
4	80.4125\\
6.3246	68.35\\
10	53.5875\\
};
\addlegendentry{$\text{Re }\lambda{}_\text{m}\text{ = -0.5}$};

\addplot [color=black,dashed,mark=diamond*]
  table[row sep=crcr]{%
0.001	99.99\\
0.0031	99.99\\
0.0099	99.95\\
0.0309	99.87\\
0.097	99.38\\
0.3045	97.42\\
0.9558	90.96\\
3	73.33\\
4	66.725\\
6.3246	54.7\\
10	41.35\\
};
\addlegendentry{$\text{Re }\lambda{}_\text{m}\text{ = -0.1}$};

\end{axis}
\end{tikzpicture}%
\end{center}

\caption{The assumption $\gamma$ small holds with high probability for  large value of $\gamma$.}\label{fig:gamma}
\end{figure}

We also assumed that $A$ was stable to reach our conclusions. Note first that Proposition~\ref{prop:compHessian}, which  provide the Hessian of $J$, holds whether $A$ is stable or not.  The assumption  was needed for Lemma~\ref{lem:limrK} to hold when $\gamma = 0$, which in turn allowed us to analyze the Hessian of $J$ via an expansion of the product $M=KRK$ around $\gamma = 0$.  When $A$ is not stable, this expansion does not hold. Furthermore,  it is easy to see that there exist loci of codimension one or two of observation vectors for which $J(\gamma,c)$ is unbounded. Loci of unbounded values can evidently not be crossed by a gradient flow. If the loci are all of co-dimension two, then one might nevertheless have almost global convergence. Even more,  since the domain $\Rp(n-1)$ is not orientable when $n$ is odd,   a locus  of codimension one does not necessarily split the domain  in two disconnected parts. Hence, the analysis of the unstable $A$ case requires a careful analysis of the undetectable modes and the homology class of their eigenspaces. 
{A rule of thumb for sensor choice.}From the proof of Theorem~\ref{th:equilanalysis}, we conclude  that a good observation vector to use is the largest eigenvector of $M_0$ (this matrix is defined in~\eqref{eq:defM}), which we denote by $\gamma c_0$, with $\|c_0\|=1$. Indeed, this vector is optimal for $\gamma=0$ and one can hope that it remains close to optimal as $\gamma$ increases. Note that it is also a good starting point of the gradient flow. In Fig. 2 we present simulation results that show that this is indeed a sensible choice when $\gamma$ is small. The curves in Fig. 2 were obtained as follows: for each curve, we sampled $10^4$ $6 \times 6$  matrices with $\operatorname{Re}\lambda_m$ as indicated on the legend. We let $Q= I_6/\sqrt{6}$.  Denote by $c^*$ be the optimal observer obtained for each sample.  Each curve represents  the average of $J(\gamma,c_0)/J(\gamma,c^*)$ as a function of $\gamma$ for different values of $\operatorname{Re}\lambda_m$. We see that for $\gamma$ very close to $0$, the $c_0$ and $c^*$'s performances are nearly indistinguishable. As $\gamma$ increases, the difference becomes more marked as expected.  We also plotted the performance of a random observer, denoted by $c_r$, which we observe performs predictably worse than both $c^*$ and $c_0$. 

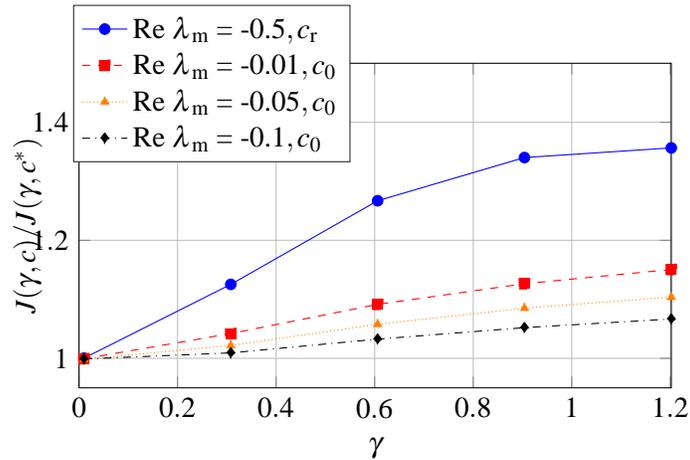
\begin{figure}
\begin{center}
\begin{tikzpicture}
\begin{axis}[%
width=3.1in,
height=1.7in,
scale only axis,
xmin=0,
xmax=1.202,
xlabel={$\gamma$},
xmajorgrids,
ymin=.95,
ymax=1.5,
ylabel={$J(\gamma, c)/J(\gamma, c^*)$},
y label style={at={(0.07,0.5)}},
ymajorgrids, 
yminorgrids,
legend style={at={(-0.01,1.18)},anchor=north west,legend cell align=left,align=left,draw=white!15!black}
]
\addplot [color=blue,mark=*,solid]
  table[row sep=crcr]{%
0.01	1.00024115952667\\
0.308	1.12529466127019\\
0.606	1.26702239268007\\
0.904	1.3402239542061\\
1.202	1.35661875170004\\
};
\addlegendentry{$\text{Re }\lambda{}_\text{m}\text{ = -0.5}, c_{\text{r}} $};
\addplot [color=red,mark=square*,dashed]
  table[row sep=crcr]{%
0.01	0.9993334073270495\\
0.308	1.04194714578447\\
0.606	1.09144448886634\\
0.904	1.12646551471824\\
1.202	1.15055011734086\\
};
\addlegendentry{$\text{Re }\lambda{}_\text{m}\text{ = -0.01}, c_0$};

\addplot [color=orange,mark=triangle*,densely dotted]
  table[row sep=crcr]{%
0.01	0.998309336201366\\
0.308	1.02185772013238\\
0.606	1.05765170179003\\
0.904	1.08501480849734\\
1.202	1.10373344791734\\
};
\addlegendentry{$\text{Re }\lambda{}_\text{m}\text{ = -0.05}, c_0$};

\addplot [color=black,mark=diamond*,dashdotted]
  table[row sep=crcr]{%
0.01	0.998770064538594\\
0.308	1.0092120966289\\
0.606	1.03248764894461\\
0.904	1.05199097532211\\
1.202	1.06660997104321\\
};
\addlegendentry{$\text{Re }\lambda{}_\text{m}\text{ = -0.1}, c_0$};

\end{axis}
\end{tikzpicture}%
\end{center}
\caption{Using $\gamma c_0$ as sensor often  yields a close-to-optimal performance. A random choice of sensor (top curve, $c_r$) performs noticeably worse.}\label{fig:thumb}  
\end{figure}

\bibliographystyle{siam}
\bibliography{sensor_bib}

\appendix
\section{Appendix}

\subsection{The Hessian of $J$ for the normal metric}

We first need an explicit expression for the Levi-Civita connection associated to the normal metric. We will derive such an expression for the case of constant vector fields. We recall that a vector field $X$ in $T\Sym(\Lambda)$ is called a {\bf constant vector field} if it is of the form \begin{equation}
\label{eq:defconstanvf}
X=[C,\Omega_x]\end{equation} for a constant $\Omega_x \in \fson$. 
\begin{Lemma}
\label{lem:levicivitanorma} Let $\<\cdot,\cdot\>$ be the normal metric on $\Sym(\Lambda)$ and let $X=[C,\Omega_x], Y=[C,\Omega_y]$ be constant vector fields in $T\Sym(\Lambda)$. Then the Levi-Civita covariant derivative of $Y$ along $X$  is
$$\nabla_XY = \frac{1}{2} [C,[\Omega_x,\Omega_y]]=\ad_C[\Omega_x,\Omega_y].$$
\end{Lemma}
\begin{proof}Denote by $\mathcal{L}_XY$ the {\bf Lie derivative} of $Y$ in the direction $X$.
Recall that the covariant derivative $\nabla$ obeys the following relation~\cite{jost_geometricanalysis}
\begin{equation}\label{eq:Relconn}
\begin{aligned}\<\nabla_X Y,Z\> = \frac{1}{2} \left[ X\cdot \<Y,Z\>+ Y\cdot \<Z,X\>- Z\cdot \<X,Y\>+\<\mathcal{L}_XY,Z\>\right.\\
\left.-\<\mathcal{L}_YZ,X\>+\<\mathcal{L}_ZX,Y\>\right]
\end{aligned}
\end{equation}
Let  $X=[C,\Omega_x],Y=[C,\Omega_y]$ and $Z=[C,\Omega_z]$ be constant vector fields. A standard calculation shows that $$\mathcal{L}_XY = [C,[\Omega_x,\Omega_y]].$$ Note that $$X \cdot\<Y,Z\> = X\cdot \tr(\Omega_y\Omega_z)=0.$$ We thus have

\begin{align*}
\< \nabla_X Y,Z \> &=  \frac{1}{2} \left[ \<\mathcal{L}_XY,Z\>-\<\mathcal{L}_YZ,X\>+\<\mathcal{L}_ZX,Y\>\right]\\
&= \frac{1}{2} \left[ \<[C,[\Omega_x,\Omega_y]],Z\>-\<[C,[\Omega_y,\Omega_z]],X\>+\<[C,[\Omega_z,\Omega_x]],Y\>\right]\\
&=\frac{1}{2} \left[ \tr([\Omega_x,\Omega_y]\Omega_z)-\tr([\Omega_y,\Omega_z]\Omega_x)+\tr([\Omega_z,\Omega_x]\Omega_y)\right]
\end{align*}
The first two terms cancel each other and we obtain
$$\< \nabla_X Y,Z \> = \frac{1}{2}\tr([\Omega_x,\Omega_y]\Omega_z).$$ Since the previous equation holds for all $\Omega_z \in \mathfrak{so}(n)$ we obtain $$\nabla_X Y = \ad_{C}[\Omega_x,\Omega_y]$$ as announced.
\end{proof}

We recall the statement of Proposition~\ref{prop:compHessian}.
\propcomphess*
\begin{proof}
Let $C \in \Sym(n,p)$ and $X=\ad_C \Omega_x,Y=\ad_C\Omega_y$ be constant vector fields. We start by evaluating the first term in the definition~\eqref{eq:defHess} of the Hessian. From the definition of the gradient and the normal metric, we have that \begin{equation}\label{eq:prlemhess1}
Y\cdot F = \gamma \<[C,M],\Omega_y\>.
\end{equation}
In order to evaluate the differential of the above function along the vector field $X$, we introduce the curve $$C(t) = e^{t\Omega_x}C e^{-t\Omega_x}.$$ We have

$$X \cdot Y \cdot F = \gamma \ddto \<[C(t),K(t)R(t)K(t)],\Omega_y\>.$$

We have already given an explicit expression for $\ddto K(t)$ in Eqn.~\eqref{eq:defdkint}. We now derive an expression for $\ddto R(t)$. Recall that $R(t)$ obeys the equation

$$(A-\gamma CK)R+R(A-\gamma CK)^\top +L = 0. $$ Taking the time derivative of both sides, and using again the short-hand $\ddto R = \dot R$ and $\ddto C = \dot C=  \ad_C\Omega_x$, we obtain

$$(A-\gamma CK)\dot R +\dot R(A-\gamma CK)^\top - \gamma R(\dot C K+C\dot K)-\gamma (K\dot C + \dot K C)R = 0.$$ Setting $S :=- R(\dot C K+C\dot K)$,  we can write explicitly

\begin{equation}
\label{eq:defdR}
\dot R = \gamma \int_0^\infty e^{(A-CK)t}(S+S^\top)e^{(A-CK)^\top t}.
\end{equation}
Gathering the relations above, we have the following expression for $X \cdot Y \cdot J$:
\begin{multline}
\label{eq:lemHessxyf}
X\cdot Y\cdot  J\\ = \gamma \left\{\<[[C,\Omega_x],M],\Omega_y\> +\<[C,\dot KRK],\Omega_y\>+\<[C,K\dot R K],\Omega_y\>+\<[C,KR\dot K)],\Omega_y\>\right\}
\end{multline} where $\dot K$ and $\dot R$ are given explicitly in~\eqref{eq:defdkint} and~\eqref{eq:defdR} respectively.
We now focus on the second term in Eqn.~\eqref{eq:defHess}. From Lemma~\ref{lem:levicivitanorma}, we now that $$\nabla_XY  =[C,[\Omega_x,\Omega_y]].$$ Let $C(t)$ be the curve in $\Sym(\Lambda)$ given by $C(t) = e^{t[\Omega_x,\Omega_y]}Ce^{-t[\Omega_x,\Omega_y]}$. Using the expression for the gradient of $J$ obtained in Theorem~\ref{th:equilgradientflow}, we get \begin{equation}
\label{eq:sectermHess}
\nabla_XY \cdot J = \gamma \< [\Omega_x,\Omega_y],M\>.\end{equation} Using the ad-invariance property of the normal metric,  the first term of~\eqref{eq:lemHessxyf} is equal to $\gamma \<[C,\Omega_x],[M,\Omega_y]\>$. Now recalling the expression of $d^2J$ given in~\eqref{eq:defHess}, we obtain the result using~\eqref{eq:lemHessxyf} and~\eqref{eq:sectermHess}.
\end{proof}
\begin{Lemma}\label{lem:covderivF}
Let $A$ be a stable matrix and $\gamma \geq 0$. Define $$F: \R^+ \times \Sym(n,p) \longmapsto T_C\Sym(n,p): (\gamma,C) \longmapsto [C,M]$$ where $K,R$ satisfy Eq.~\eqref{eq:eqKR} and $M=KRK$. The covariant derivative of $F$ at $(0,C)$ and with respect to its second argument is $$\nabla_X F=-\frac{1}{2}\left([M_0,[C,\Omega_x]]+[\Omega_x,[C,M_0]]\right).$$

\end{Lemma}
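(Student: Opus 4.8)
The plan is to exploit the collapse of the Riccati data at $\gamma=0$ and then to read off the covariant derivative from the explicit Levi-Civita connection of Lemma~\ref{lem:levicivitanorma}. First I would show that $K$, $R$, and hence $M=KRK$, are stationary in $C$ at $\gamma=0$. Setting $\gamma=0$ in Eq.~\eqref{eq:eqKR} decouples it into the two Lyapunov equations $A^\top K_0+K_0A+Q=0$ and $AR_0+R_0A^\top+L=0$, whose solutions $K_0$ and $R_0$ do not involve $C$, so that $M=M_0=K_0R_0K_0$ as in~\eqref{eq:defMo}. Differentiating Eq.~\eqref{eq:eqKR} along a curve $C(t)$ with $\ddto C(t)=X=\ad_C\Omega_x$ and then setting $\gamma=0$, every term carrying a factor $\gamma$ drops out, so the first-order perturbations satisfy the homogeneous equations $A^\top\dot K+\dot K A=0$ and $A\dot R+\dot R A^\top=0$. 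Since $A$ is stable, these Lyapunov equations have only the trivial solution; hence $\dot K=\dot R=0$ and $\ddto M=0$ at $\gamma=0$.

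Because a covariant derivative depends only on the $1$-jet of the field at the base point, the vanishing of $\ddto M$ means that at $\gamma=0$ I may replace $M$ by the constant symmetric matrix $M_0$ and compute $\nabla_X[C,M_0]$ in place of $\nabla_X F$. Now $[C,M_0]=\ad_C M_0$ is not a constant vector field in the sense of Eq.~\eqref{eq:defconstanvf}, since $M_0$ is symmetric rather than skew, so Lemma~\ref{lem:levicivitanorma} does not apply verbatim. Instead I would represent tangent vector fields by skew-symmetric matrix functions $\Omega(C)$ through $\ad_C\Omega$, and combine Lemma~\ref{lem:levicivitanorma} with the Leibniz rule and the tensorial character of the connection correction: this expresses the covariant derivative as the sum of the ordinary directional derivative $X\cdot\Omega$ and a term bilinear in $\Omega_x$ and $\Omega(C)$. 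Taking $\Omega=F=[C,M]$ and using $\ddto M=0$ to get $X\cdot F=[X,M_0]=[\ad_C\Omega_x,M_0]$, the whole expression reduces to nested commutators of $C$, $M_0$ and $\Omega_x$, with no integral terms surviving.

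Finally I would rearrange the resulting nested brackets into the stated symmetric form $-\tfrac12\big([M_0,\ad_C\Omega_x]+[\Omega_x,[C,M_0]]\big)$ using the Jacobi identity, the overall normalization being the one fixed by Lemma~\ref{lem:levicivitanorma}. The step I expect to be the main obstacle is the correct treatment of the connection term: since $F$ is not a constant vector field one must carefully separate the contribution of the explicit $C$ appearing in $[C,M]$ from the contribution of the metric, and it is essential to have first established $\ddto M=0$, as this is exactly what removes the genuinely hard, integral-valued dependence of $M$ on $C$ that is present for $\gamma>0$. I note for the application that at a zero $C_i(0)$ of $F$ one has $[C,M_0]=0$, so the second bracket vanishes and $\nabla_X F$ becomes proportional to $[M_0,\ad_C\Omega_x]$, which is precisely the term whose non-degeneracy is exploited in Proposition~\ref{prop:finitenumequ}.
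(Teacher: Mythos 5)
Your proposal is correct in substance and shares the paper's skeleton---freeze the Riccati data at $\gamma=0$ so that $M=M_0=K_0R_0K_0$ no longer depends on $C$, compute the connection term via Lemma~\ref{lem:levicivitanorma}, and tidy up with the Jacobi identity---but it genuinely diverges at the one step the paper elides, and for the better. The paper's proof simply asserts that $F(0,C)=[C,M_0]$ is a constant vector field in the sense of Eq.~\eqref{eq:defconstanvf} and substitutes $M_0$ for $\Omega_y$ in Lemma~\ref{lem:levicivitanorma}, even though $M_0$ is symmetric rather than skew-symmetric; you correctly flag this mismatch and repair it by writing the field through a skew generator $\Omega(C)=[C,M]$ and invoking the Leibniz property of the connection, $\nabla_X\bigl(\ad_C\Omega(C)\bigr)=\ad_C(X\cdot\Omega)+\tfrac12\,\ad_C[\Omega_x,\Omega(C)]$, with $X\cdot\Omega=[[C,\Omega_x],M_0]$ once $\ddto M=0$ is established. (Your homogeneous-Lyapunov verification of $\dot K=\dot R=0$ is sound though slightly redundant: at $\gamma=0$ the equations~\eqref{eq:eqKR} contain no $C$ at all, so $K\equiv K_0$ and $R\equiv R_0$ identically in $C$.) One caveat: carried out literally, your route yields $\ad_C\bigl([[C,\Omega_x],M_0]+\tfrac12[\Omega_x,[C,M_0]]\bigr)$, and no Jacobi rearrangement turns this identically into the stated $-\tfrac12\bigl([M_0,[C,\Omega_x]]+[\Omega_x,[C,M_0]]\bigr)$; the two agree only where $[C,M_0]=0$, and then only up to the identification of $(\ker \ad_C)^\perp$ with $T_C\Sym(n,p)$ via $\ad_C$ and an overall constant. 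You should not be charged for this: the paper's own proof has the same feature, since by the Jacobi identity its intermediate expression $\tfrac12[C,[\Omega_x,M_0]]$ differs from its stated conclusion by $[\Omega_x,[C,M_0]]$, and the lemma is invoked in Proposition~\ref{prop:finitenumequ} only at the zeros $C_i(0)$ of $F$, where that bracket vanishes---precisely the observation you make in your closing remark. Your variant serves the application equally well, since at a zero it gives $\nabla_XF=-\ad_C[M_0,[C,\Omega_x]]$, which after conjugation becomes $\Omega_x\mapsto[E,[D,[E,\Omega_x]]]$ and is diagonal with nonzero eigenvalues $d_i-d_j$ in the basis of Lemma~\ref{lem:isoorthobasis}, so the nondegeneracy argument goes through unchanged.
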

\begin{proof}
We need to evaluate $\nabla_\Omega F(0,C)$. Observe that $F(0,C)$ is a constant vector field as defined in~\eqref{eq:defconstanvf}. From Lemma~\ref{lem:levicivitanorma}, a short calculation yields  $$\nabla_X F = \frac{1}{2} [C,[\Omega_x,M_0]].$$ Using the Jacobi identity, the previous relation can expressed as$$\nabla_X F=-\frac{1}{2}\left([M_0,[C,\Omega_x]]+[\Omega_x,[C,M_0]]\right)$$ as announced.
\end{proof}

\lemmasiggeb*
\begin{proof}[Proof of Lemma~\ref{lem:calcsignHessgen}]
Recall that the dimension of $\Sym(n,p)$ is $d:=pn-p^2$. We first verify that for $p$ distinct integers  $1 \leq m_i \leq n$  summing to $m$, $m-\frac{p(p+1)}{2} \in \{0,1,\ldots,d\}$. Indeed, on the one hand the smallest value that $m$ can take is $1+2+\ldots+p=\frac{p(p+1)}{2}$. On the other hand, the largest value of $m$ is $(n-p+1)+(n-p+2)+\ldots+(n-1)+n$. This last expression is equal $p(n-p)+p(p+1)/2$.  This proves the  claim.

As before, we let $\Omega_{ij}$ be the skew-symmetric matrix with zero entries everywhere except for the $ij$th entry, which is $1$, and the $ji$th entry, which is $-1$ and we let $\Sigma_{ij}$ be the symmetric matrix with zeros everywhere except for the $ij$th and $ji$th entry, which are one. We have shown in Lemma~\ref{lem:isoorthobasis} that the tangent space of $\Sym(n,p)$ at $E$ is spanned by a basis with vectors $[E,\Omega_{ij}]$ where $i \in \mathcal{M}:=\{m_1,\ldots,m_p\}$ and $j$ is in the complement of ${\mathcal{M}}$ in $\{1,2,\ldots,n\}$, which we denoted $\overline{\mathcal{M}}$. We claim that this basis  diagonalizes the bilinear form $H_E$. To see this, first note that $$[D,\Omega_{ij}] = (d_i-d_j)\Sigma_{ij}.$$ Second, an easy calculation show that for $i>j$

$$[E,\Omega_{ij}]=\left\lbrace \begin{aligned}\Sigma_{ij}& \mbox{ if } i \in\mathcal{M}\mbox{and }j \in \overline{\mathcal{M}}\\
0& \mbox{ otherwise}\end{aligned}\right.$$ Because $\tr(\Sigma_{ij}\Sigma_{kl})=2$ if $i=j$ and $k=l$ and zero otherwise we conclude that
$$H_E(\Omega_{ij},\Omega_{kl})= 2 (d_i-d_j) \delta_{ik}\delta_{jl}.$$ The bilinear form is non-degenerate and because the $d_i$'s are distinct and sorted in decreasing order, i.e. $d_i-d_j >0$ if and only if $i>j$. Thus, the number of positive eigenvalues of $H_E$ is equal to the number of integer pairs $(i,j) \in \mathcal{M} \times \overline{\mathcal{M}}$ with $i>j$.  We can enumerate such pairs as follows, : for $i=m_1$, any $j\in \{1,\ldots, m_1-1\}$ is such that the above requirement on the pair $(i,j)$ is satisfied. There are $m_1-1$ such $j$'s. For $i=m_2$, the requirement holds for any $j \in \{1,\ldots, m_1-1,m_1+1, \ldots, m_2-1\}$. There are $m_2-2$ such $j$'s. Generally, for $i=m_l$, there are $m_1+m_2+\ldots+m_l-1-2-\ldots-l$ pairs passing the requirement.  Hence there is a total of $m -\frac{p(p+1)}{2}$ positive eigenvalues as announced.
\end{proof}

The following Lemma is used to show that the matrix $M_0$ used in the main part of the paper generically has distinct eigenvalues, and thus a unique basis of orthonormal eigenvectors.

\begin{Lemma}\label{lem:genQL}
Let $A\in\R^{n \times n}$ be a stable matrix and $Q, L$ be positive definite symmetric matrices. Let $K, R \in \R^{n\times n}$ be the unique positive definite solutions of
\begin{equation}
\label{lyapLemmgen}
\begin{aligned}
AK+ KA^\top +Q &=0\\
A R + RA^\top +L &=0 
\end{aligned}\end{equation}
Then generically for $Q,L$ positive definite, the matrix $M:=KRK$ has distinct eigenvalues.
\end{Lemma}

\begin{proof}
We first recall that since $A$ is stable, the Lyapunov equations in~\eqref{lyapLemmgen} have each a unique symmetric positive definite solution. We denote them  by $\sL(Q)$ and $\sL(L)$ respectively, i.e. $$\mathcal{L}(Q)=\int_0^\infty e^{At}Qe^{A^\top t}dt.$$  We let $S^+$ be the set of symmetric positive definite matrices of dimension $n$ and define  the map $F:S^+ \times S^+ \longmapsto S^+:(Q,L) \longmapsto M$ where $K=\sL(Q)$ and $R=\sL(L)$. The proof of the Lemma goes by showing that for a generic point $(Q,L) \in S^+ \times S^+$, $F$ is locally surjective, i.e. $F$ maps small enough neighborhoods of $(Q,L)$ onto neighborhoods of $F(Q,L)$. From there, the statement of the Lemma follows from a simple contradiction argument. Indeed, assume that $F$ is locally surjective but that there exists an open set $V \subset S^+\times S^+$ for which $F(V)$ only contains matrices with non-distinct eigenvalues. The set of matrices in $S^+$ which have non-distinct eigenvalues is of measure zero and thus for any pair $(Q,L)$ in $V$, $F$ is not locally surjective -- a contradiction.

The remainder of the proof is dedicated to showing that $F$ is generically locally surjective (g.l.s.). To this end, note that an open map is clearly g.l.s. and that the composition of generically locally surjective  maps is likewise  g.l.s. . To see that this last statement holds, assume that $f_1:M \longmapsto N $ and $f_2:N \longmapsto P$ are g.l.s. and let $f_3=f_2\circ f_1$. Let $C_1 \subset M$ (resp. $C_2 \subset N$) be the set of points where $f_1$ is not locally surjective (resp. $f_2$) and let $D = \{x \in M \mid f_1(x) \in C_2\}$. The sets $C_1$ and $C_2$ are of measure zero by assumption and  by the same argument as in the paragraph above, $D$ is of measure zero in $M$. Thus for $x \notin C_1 \cup D$, $f_3(x)$ is locally surjective and since $C_1 \cup D$ is of measure zero, $f_3$ is g.l.s. .

We now return to the main thread. Let $G:\R^{n\times n} \times \R^{n\times n} \longmapsto \R^{n\times n}:G(K,R)=M.$ Then we can write $F$ as the composition $F = G \circ (\sL(Q),\sL(L))$. The operator $\sL^{-1}(X)=AX+XA^\top$ is nothing more that the Lyapunov operator.  One can show that for $A$ stable, the Lyapunov operator is of full-rank (observe that its eigenvalues are pairwise sums of eigenvalues of $A$). Its inverse  $\sL$ is thus a full rank linear map  and consequently an open map. By a standard argument, one can show that the map $(Q,L)\longmapsto (\sL(Q),\sL(L)$ is also an open map. The map $G$ is a polynomial map and is clearly surjective. If we can show that $G$ is g.l.s., then $F$ is the composition of g.l.s. maps and is thus g.l.s. which proves the Lemma.

It thus remains to show that $G$ is g.l.s. To see this, first recall that at points $(K,R)$ in the domain of $G$ where its linearization $\frac{\partial G}{\partial x}$ is full rank, $G$ is locally surjective. Now assume that there is an open set $V$ in the domain of $G$ where its linearization is nowhere full rank. Then $\det(\frac{\partial F}{\partial x}\frac{\partial F}{\partial x}^\top)=0$ on the open set $V$ and because this determinant is a polynomial function, it is  zero everywhere. By Sard Theorem~\cite{Lee:2009dz}, the set $W$ over which the linearization of $G$ is not full rank is such that $G(W)$ has measure zero. But we have just shown that $W$ is the entire domain of $G$, which contradicts the fact that $G$ is surjective. This ends the proof of the Lemma.
\end{proof}

\end{document}